\documentclass[12pt]{amsart}

\usepackage{bbm}

\usepackage{bbm}
\usepackage{enumerate}
\usepackage{amssymb}
%\usepackage[amsthm]{ntheorem}
%\usepackage{setspace} 
%\doublespacing

\newtheorem{theorem}{Theorem}
\newtheorem{proposition}[theorem]{Proposition}
\newtheorem{lemma}[theorem]{Lemma}
\newtheorem{corollary}[theorem]{Corollary}

\theoremstyle{definition}

\theoremstyle{remark}

\newcommand \NN{\mathbbm{N}}
\newcommand \SI{{\rm Sym}(\NN)}
\newcommand \ACL{{\rm ACL}}
\newcommand \Aut{{\rm Aut}}
\newcommand \Sym{{\rm Sym}}
\newcommand \supp{{\rm supp}}

\begin{document}

\title{Rooted trees, strong cofinality and ample generics}
\author{Maciej Malicki}
\address{Institute of Mathematics, Polish Academy of Sciences, Sniadeckich 8, 00-956, Warsaw, Poland}
\email{mamalicki@gmail.com}
\date{June 07, 2010}
\subjclass[2000]{(primary) 37B05, 03E15}

\begin{abstract}
We characterize those countable rooted trees whose full automorphism group has uncountable strong cofinality or contains an open subgroup with ample generics.

%As an application, we analyze the relationship between two generalizations (discovered by Psaltis and Forester) of an interesting rigidity result on locally finite trees, originally proved by Bass and Lubotzky.
\end{abstract}

\maketitle
\section{Introduction}

%This paper is devoted to a study of certain fixed point properties coming from geometric group theory, and their relatives, in the context of full automorphism groups of countable rooted trees. In other words, we investigate notions typically applied to countable groups in the context of uncountable groups that are additionally equipped with a topological structure.
  
%To be more specific, we study Serre's property (FA'), property (FA), the uncountable strong cofinality, and ample generics. Recall that a group $G$ has property (FA') if for every action of $G$ on a tree without inversions, every $g \in G$ has a fixed point (see \cite{Se}.) $G$ has property (FA) if every action of $G$ on a tree without inversions has a fixed point (see \cite{Se}.) Uncountable strong cofinality is equivalent to the condition that every isometric action of $G$ on a metric space has bounded orbits. This notion was introduced by Bergman (\cite{Be}), and also studied by Droste and G\"obel in \cite{DrGo}, Droste and Holland in \cite{DrHo}, and Kechris and Rosendal in \cite{KeRo}. Finally, ample generics is a notion of a 

In this paper, we study full automorphism groups of countable rooted trees, equipped with the standard product topology. We are mainly interested in the notions of strong cofinality and ample generics.

Recall that a group $G$ has uncountable strong cofinality if whenever $G$ is a union of a countable chain of subsets $A_0 \subset A_1 \subset \ldots$, then $A_m^k=G$ for some $k, m \in \NN$. This property was introduced by Bergman in \cite{Be}, and studied by authors such as Cournulier \cite{Co}, Droste and Holland \cite{DrHo} or Kechris and Rosendal \cite{KeRo}. It can also be viewed as a type of fixed point property, linking it to geometric group theory: it was observed in \cite{Co} that $G$ has uncountable strong cofinality if and only if every isometric action of $G$ on a metric space has bounded orbits. In particular, uncountable strong cofinality implies Serre's property (FA).

A separable and completely metrizable (that is, Polish) topological group $G$ has ample generics if the diagonal action of $G$ on $G^n$ by conjugation has a comeagre orbit for every $n \in \NN$. This notion was first studied by Hodges, Hodkinson, Lascar and Shelach in \cite{HoHo}, and later by Kechris in Rosendal in \cite{KeRo}. It is a very strong property: a group $G$ with ample generics, or even containing an open subgroup with ample generics, has the small index property, every homomorphism from $G$ into a separable group is continuous, every action of $G$ on a separable space is continuous, and there is only one Polish group topology on $G$ (see \cite{KeRo}.)

These two seemingly unrelated concepts have in fact something in common. For example, if $G$ has ample generics, and $G$ is a union of a countable chain of \emph{non-open subgroups} $G_0 \leq G_1 \leq \ldots$, then $G_m=G$ for some $m \in \NN$ (see Theorem 1.7, \cite{KeRo}.) However, in general none of them is implied by the other.

Our main results show that in the context of automorphism groups of rooted trees, both of these properties are strictly related to the behavior of the algebraic closures of finite sets, and that in a sense having ample generics is a strong form of having uncountable strong cofinality.

Let $\ACL_T(X)$ for $X \subseteq T$ denote the algebraic closure of $X$ in $T$, that is, the set of all elements of $T$ contained in a finite orbit under the action of the pointwise stabilizer $G_{\left\langle X\right\rangle}$ of $X$. Then we have

\begin{theorem}
\label{Th1}
Let $T$ be a countable rooted tree, $G=\Aut(T)$.
\begin{enumerate}
\item $G$ has uncountable strong cofinality iff $\ACL_T(\emptyset)$ is finite;
\item $G$ has an open subgroup with ample generics iff $\ACL_T(X)$ is finite for every finite $X \subseteq T$.
\end{enumerate}
\end{theorem}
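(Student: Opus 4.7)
In both items, one direction translates finiteness of the relevant algebraic closure into a dynamical property of $G=\Aut(T)$, while the other (contrapositive) uses infinitely many finite orbits of an appropriate pointwise stabilizer to construct an obstruction. The common principle is that when the relevant $\ACL$ is finite one has ``infinite room'' in the non-algebraic orbits, which is enough to extend partial automorphisms in a Bergman-style/Fra\"{\i}ss\'{e}-style argument; when it is infinite, the many finite orbits supply either a continuous conjugation-invariant or an unbounded isometric action.

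\textbf{Part~(1).} For ``$\Leftarrow$'', set $F:=\ACL_T(\emptyset)$ and $H:=G_{\left\langle F\right\rangle}$, which has finite index in $G$, so it suffices to prove uncountable strong cofinality for $H$. Every $H$-orbit on $T\setminus F$ is infinite by construction, and I would adapt Bergman's proof for $\Sym(\NN)$: given an exhaustion $H=\bigcup_n A_n$, find $m,k$ so that every element of $H$ is a product of $k$ elements of $A_m$, by realising each required partial automorphism inside $A_m$ via the infinite orbits. For ``$\Rightarrow$'' (contrapositive), the infinitely many finite $G$-orbits $O_n$ in $\ACL_T(\emptyset)$ give continuous projections $\pi_n\colon G\to\Sym(O_n)$; by the characterisation from \cite{Co} it is enough to build an unbounded isometric action of $G$. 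I would do so via a left-invariant length function measuring the weighted displacement $g$ induces on the $O_n$, with weights growing so that elements of $G$ acting non-trivially on arbitrarily many orbits at once are unbounded.

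\textbf{Part~(2).} For ``$\Rightarrow$'' (contrapositive), suppose some $\ACL_T(X)$ is infinite and let $H$ be any open subgroup of $G$. After enlarging $X$ to contain a finite defining set of $H$ (monotonicity of $\ACL_T$ keeps the closure infinite), $K:=G_{\left\langle X\right\rangle}$ is clopen in $H$. The infinitely many finite $K$-orbits $O_n\subseteq\ACL_T(X)$ give a continuous conjugation-invariant cycle-type map $\psi\colon K\to\prod_n\mathcal{P}(|O_n|)$ (partitions of $|O_n|$) whose image is uncountable; the intersection of a comeagre $H$-conjugacy class with $K$ would be comeagre in $K$ and its $\psi$-image contained in a single orbit under the coordinate-permutation action of $H$ on $\prod_n\mathcal{P}(|O_n|)$, forcing a Baire-category contradiction. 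For ``$\Leftarrow$'', take $H:=G_{\left\langle F\right\rangle}$ with $F:=\ACL_T(\emptyset)$ and apply the Kechris--Rosendal criterion from \cite{KeRo}: identify a cofinal class $\mathcal{K}$ of finite partial automorphisms of $T$ fixing $F$ and verify joint embedding and weak amalgamation. The hypothesis that every $\ACL_T(X)$ is finite is the engine: one can close off any finite data to its (still finite) algebraic closure without disturbing the infinite non-algebraic orbits above, and two such extensions can then be identified using the freedom in those infinite orbits.

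\textbf{Main obstacle.} I expect the crux to be the backward direction of~(2): verifying weak amalgamation for the right class of finite partial automorphisms of a rooted tree. Rooted trees are in general far from homogeneous, so $\mathcal{K}$ must match the subtree-type of each node, and amalgamation must proceed above the algebraic closure of the shared data; the finiteness assumption on $\ACL_T(X)$ must be leveraged uniformly over all finite $X$. A secondary subtlety is in the forward direction of~(2): one must ensure that the cycle-type obstruction rules out a comeagre conjugacy class of $H$, not merely of $K$, which requires carefully tracking how $H$ permutes the orbits $O_n$ and how this interacts with the image of $\psi$.
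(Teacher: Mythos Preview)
Your backward directions line up with the paper: for (2)$\Leftarrow$ the paper takes exactly the open subgroup $H=G_{\langle \ACL_T(\emptyset)\rangle}$ and verifies WAP and JEP for $\mathcal K_p^n$ using finite algebraic closures as the ``closing off'' step; for (1)$\Leftarrow$ the paper makes the Bergman-style argument precise through the wreath-product decomposition $\Aut(T)=Wr_{t\in\overline T}\,G_t$, reducing to products of the form $\otimes_n(H_n\,Wr\,\SI)$ and a permutation lemma (Lemma~\ref{leMain}).

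The contrapositive directions are where you diverge from the paper, and your argument for (2)$\Rightarrow$ has a real gap. The paper never attacks comeagre conjugacy classes directly. Instead, whenever the relevant $\ACL$ is infinite it produces a continuous surjective homomorphism from (an open subgroup of) $G$ onto $(\mathbbm{Z}_2)^{\NN}$, assembled from sign characters on the finite symmetric factors of the wreath decomposition (Lemma~\ref{le4}); this single construction simultaneously yields countable cofinality and failure of the small index property (Lemma~\ref{le3.5}), and the implication (open subgroup with ample generics)$\Rightarrow$(SIP) from \cite{KeRo} then finishes (2). Your cycle-type map $\psi\colon K\to\prod_n\mathcal P(|O_n|)$ is $K$-conjugation-invariant, but the step ``$\psi(C\cap K)$ lies in a single orbit under the coordinate-permutation action of $H$'' breaks down: you only arranged $K=G_{\langle X\rangle}\leq H$, not $K\trianglelefteq H$, so an $h\in H\setminus K$ need not fix $X$ even setwise and hence need not permute the $K$-orbits $O_n$; for $g,g'\in C\cap K$ with $g'=hgh^{-1}$ the set $h^{-1}(O_n)$ is $g$-invariant but is generally not one of the $O_m$. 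Even in the favourable case where $H$ does permute the $O_n$, a single orbit of the coordinate-permutation action on $\prod_n\mathcal P(|O_n|)$ can be uncountable (e.g.\ all of $2^{\NN}$ when each $|O_n|=2$ and $H$ acts transitively on the index set), so no Baire contradiction is forced. The paper's detour through SIP and the $(\mathbbm{Z}_2)^{\NN}$ quotient sidesteps all of this and gives the sharper dichotomies recorded in Theorems~\ref{ThM1} and~\ref{ThM2}.
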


It turns out that Theorem \ref{Th1} sheds light on the relationship between certain known results on rigidity of groups of automorphisms of trees, which we will discuss in the last section of the paper. In fact, this investigation was inspired by them.

%In the last section, we give a necessary and sufficient condition for automorphism groups of rooted trees to have ample generics. A Polish group has ample generics if the diagonal action of $G$ on $G^n$ by conjugation has a comeagre orbit for every $n$. This notion was first studied by Hodges, Hodkinson, Lascar and Shelach in \cite{HoHo}, and later by Kechris in Rosendal in \cite{KeRo}. It implies (see \cite{KeRo}) that $G$ has the small index property, every homomorphism from $G$ into a separable group is continuous, every action of $G$ on a separable space is continuous, and the standard product topology is the unique Polish topology on $G$.

%Theorems \ref{xxx} and \ref{xxx} can be generalized to a wider family of groups, namely unrestricted generalized wreath product of symmetric groups defined over rooted tree orderings. We explain how to obtain these more general versions of our results, however state them only for automorphism groups of rooted. This approach seems to be more natural.

\section{Notation and basic facts}
\paragraph{\textbf{Trees}}
By a \emph{rooted tree} $T$ with root $r$ we mean an ordering $(T,<)$ with the smallest element $r$, and such that all initial sets in $T$ are finite chains. Any such tree ordering $<$ is determined by the corresponding predecessor function $p$ (with $p(r)=r$.)

By a \emph{subtree} of $T$, we mean a subset $T' \subseteq T$ that is closed under the predecessor function. 
%A non-rooted tree is a connected graph without loops.

Every full automorphism group $\Aut(T)$ of a tree $T$ is assumed to be equipped with the pointwise convergence topology, which is easily seen to be separable and completely metrizable (that is, Polish.) If $G=\Aut(T)$ and $X \subset T$, then $G_{\left\langle X \right\rangle}$ stands for the pointwise stabilizer of $X$.

A \emph{leaf} in $X \subset T$ is an element with no successors in $X$. For a rooted tree $T$, and $t \in T$, $T_t$ is a rooted tree with root $t$, defined by $T_t= \{t' \in T: t' \geq t \}$.

In this paper (except for the last section), all trees are assumed to be rooted and countable.

%\paragraph{\textbf{Free products with amalgamation}}

%For groups $A$, $B$, $C$ and embeddings $\phi_1:C \rightarrow A$, $\phi_2:C \rightarrow B$, the free product with amalgamation $A*_C B$ is the group $A*B$ with adjoined relations $\phi_1(c)\phi_2^{-1}(c)=e$ for every $c \in C$. Every element of $A*_C B$ can be represented as a reduced word $g_1\ldots g_n$, where $g_i \in A$ or $g_i \in B$, $g_i$ and $g_{i+1}$ are not both in $A$ or both in $B$, and if $g_i \in C$ for some $i$, then $n=1$.

%If $g_1 \ldots g_n= h_1 \ldots h_m$ and both sides of the equality are reduced, then $n = m$. We call $n$ \emph{the length} of $g_1 \ldots g_n$, and for $g=g_1 \ldots g_n$ write $\left|g \right|=n$.

%If $g_1\ldots g_n$, $h_1\ldots h_m$ are words, we can concatenate them to form a new word $g_1\ldots g_nh_1\ldots h_m$.

%We say that a group $G$ is not a non-trivial free product with amalgamation if whenever $G=A*_C B$, then $G \subseteq A$ or $G \subseteq B$.

\paragraph{\textbf{Wreath products}}
To avoid unnecessarily complicated notation, we will define the unrestricted generalized wreath product only for rooted trees. This definition agrees with the notion of generalized wreath product defined in \cite{Hol}, except that we reverse the underlying ordering. 

Let $T$ be a rooted tree, $N_t$, $t \in T$, be finite or countably infinite sets, and let $G_t$, $t \in T$, be permutation groups of $N_t$. We define $Wr_{t \in T} \, G_t$ as a group of permutations of $X=\prod_{t \in T} N_t$ satisfying the following conditions. %for every $x,y \in X$, $t \in T$:

For $x \in X$, $s,t \in T$, $i \in N_t$ define $x^t_i \in X$ by
\[ x^t_i(s)=\left\{ 
 \begin{array}{ cc }
     x(s) & {\rm if} \ s \neq t \\
     i & {\rm if} \ s=t
 \end{array}
 \right. 
.\]
Now, $g \in {\rm Sym}(X)$ is an element of $Wr_{t \in T} \, G_t$ if for every $x,y \in X$, and $t \in T$
\begin{enumerate}
\item $x(s)=y(s)$ for all $s<t$ $\Rightarrow g(x)(s)=g(y)(s)$ for all $s<t$;
\item there is $\sigma \in G_t$ such that $g(x^t_i)(t)=\sigma(i)$ for all $i \in N_t$.
\end{enumerate}

%\begin{equation}
%\label{eq1}
%x(s)=y(s) {\rm \ for \ all \ } s>t \Rightarrow g(x)(s)=g(y)(s) {\rm \ for \ all \ }s>t.
%\end{equation}

%If each $G_t$ is a symmetric group ${\rm Sym}(N_t)$, the second condition is not needed.

It is easy to see that $Wr_{t \in T} \, G_t$ is indeed a group.

For two groups $G_1$, $G_2$ of permutations of sets $N_1$, $N_2$, respectively, the standard wreath product $G=G_1 Wr \, G_2$ agrees with the above definition if we take $T=\{t_1,t_2 \}$, with $t_1>t_2$ and $G_{t_1}=G_1$, $G_{t_2}=G_2$. Then the \emph{base group} $G^{base}$ of $G$ is a normal group of all the permutations $g \in G$ such $g(n_1,n_2)=(n'_1,n_2)$ for every $(n_1,n_2) \in N_1 \times N_2$, that is, the second coordinate of $(n_1,n_2)$ stays fixed. It is easy to see that $G^{base} = G_1^{N_2}$, and that every element $g \in G$ is of the form $g=g_1g_2$, where $g_1 \in G^{base}$, $g_2 \in G_2$. In other words, $G=G^{base}G_2$.

Now, let $T$ be a rooted tree, $G=\Aut(T)$, and $B$ be the set of all cofinal branches in $T$. Then $G$ naturally permutes $B$. Let $B_0 \subseteq B$ be a set containing exactly one representative of every orbit of the action of $G$ on $B$, and let $\overline{T} \subseteq T$ be the tree induced by $B_0$. For $t \in \overline{T}$, put $N_t=\{0, \ldots, N-1\}$, where $N$ is the size of the orbit of $t$ under the action of $G_{\left\langle p(t)\right\rangle}$ on $T$, and $G_t=\Sym(N_t)$.

It is a straightforward exercise to show that $G \cong Wr_{t \in \overline{T}} \, G_t$. Let
\[Y=\{ (n_t)_{t \in I}: n_t \in N_t, \ I {\rm \ is \ an \ initial \ segment \ in \ } \overline{T} \}. \]

It is not hard to see that $Y$ with the ordering
\[ (n_t)_{t \in I} \leq_Y (m_t)_{t \in J} \Leftrightarrow I \subseteq J {\rm \ and \ } n_t=m_t {\rm \ for \ } t \in I \]
is isomorphic to $T$.

Now observe that $Wr_{t \in \overline{T}} \, G_t$ acts faithfully on $Y$ in such a manner that for $g \in \Sym(Y)$ we have that $g \in Wr_{t \in \overline{T}} \, G_t$ if and only if
\begin{enumerate}
\item $g((n_t)_{t \in I})=(m_t)_{t \in J}$ implies that $I=J$;
\item if $g((n_t)_{t \in I})=(m_t)_{t \in I}$ and $I' \subset I$ is an initial segment in $I$, then $g((n_t)_{t \in I'})=(m_t)_{t \in I'}$.
\end{enumerate}

Therefore, $G \cong Wr_{t \in \overline{T}} \, G_t$, and this is independent of the choice of $\overline{T}$. Nevertheless, for a tree $T$, by $\overline{T}$ we will always mean some fixed set as above.

Recall that a structure $M$ is \emph{ultrahomogeneous} if every isomorphism $\phi:A \rightarrow B$ between finite substructures of $M$ can be extended to an automorphism of $M$. 

Obviously, not all trees are ultrahomogeneous. We add to $T$ a family of unary predicates $P_t$, $t \in \overline{T}$, such that the structure $(T,p,\{P_t\}_{t \in \overline{T}})$ is ultrahomogeneous, locally finite, and has the same automorphisms as $T$:

\[ P_t(t') \leftrightarrow t'=g(t) {\rm \ for \ some \ } g \in \Aut(T) \]
for $t \in \overline{T}$, $t' \in T$.
  
\begin{proposition}
\label{prWr}
Let $T$ be a  tree, $\overline{T}$, $N_t,G_t$ for $t \in \overline{T}$ be defined as above.
The structure $(T,p,\{P_t\}_{t \in \overline{T}})$ is ultrahomogeneous, locally finite, and
\[ \Aut(T)=\Aut(T,p, \{P_t\}_{t \in \overline{T}}).\]
\end{proposition}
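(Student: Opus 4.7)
The plan is to address the three assertions in turn, with ultrahomogeneity carrying the real content and the other two parts being essentially formal. For local finiteness, the language has the unary function $p$ and unary predicates $\{P_t\}_{t \in \overline{T}}$, so the substructure generated by a finite $X \subseteq T$ equals $X \cup p(X) \cup p^2(X) \cup \cdots$, which is finite because iterating $p$ reaches the root. For the automorphism-group equality, any $g \in \Aut(T)$ preserves $p$ (definable from the tree order) and fixes each $P_t$ setwise by its very definition as an $\Aut(T)$-orbit; conversely, any bijection preserving $p$ preserves $<$, so $\Aut(T, p, \{P_t\}) \subseteq \Aut(T, p) = \Aut(T)$.

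For ultrahomogeneity, let $\phi: A \to B$ be an isomorphism between finite substructures. Via the identification $T \cong Y$ above, each $a \in A$ has the form $(n_s^a)_{s \in I_a}$ for an initial segment $I_a \subseteq \overline{T}$; preservation of each $P_t$ forces $I_{\phi(a)} = I_a$, and preservation of $p$ makes $\phi$ commute with the restriction operation on $Y$. I would construct an extension $g \in Wr_{t \in \overline{T}} \, G_t$ by prescribing, for each $t \in \overline{T}$ and each ``history'' $\alpha$ on $\{s \in \overline{T} : s < t\}$ with $\alpha(s) \in N_s$, a permutation $\sigma_{t, \alpha} \in G_t = \Sym(N_t)$. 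The constraints imposed by $\phi$ read: whenever $a \in A$ has $\max I_a = t$ and restricts to $\alpha$ below $t$, we require $\sigma_{t, \alpha}(a(t)) = \phi(a)(t)$. Because $A$ is closed under $p$, these constraints across all levels together yield $g(a) = \phi(a)$ for every $a \in A$; for histories $\alpha$ giving no constraint, one sets $\sigma_{t, \alpha} = \mathrm{id}$.

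The main obstacle is verifying that for each $t$ and $\alpha$ these constraints define a partial bijection on $N_t$, which can then be extended to an element of $\Sym(N_t) = G_t$. Well-definedness is immediate, since two elements of $A$ sharing both $\alpha$ and the $t$-coordinate must be equal. Injectivity of $a(t) \mapsto \phi(a)(t)$ follows from injectivity of $\phi$: if $\phi(a)(t) = \phi(a')(t)$ and $a, a' \in A$ share history $\alpha$, then $\phi(a)$ and $\phi(a')$ agree on their restriction (namely $\phi$ applied to the common parent $p(a) = p(a') \in A$) and at coordinate $t$, hence coincide, forcing $a = a'$. The resulting $g$ lies in $Wr_{t \in \overline{T}} \, G_t = \Aut(T)$ and restricts to $\phi$ on $A$, completing the plan.
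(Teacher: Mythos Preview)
Your argument is correct, but it proceeds along a different line from the paper. The paper establishes ultrahomogeneity by a one-step extension plus back-and-forth: given an isomorphism $f:A\to B$ of finite substructures and an immediate successor $t\notin A$ of some $a\in A$, it picks $g\in\Aut(T)$ with $g(a)=f(a)$, notes that the $G_{\langle a\rangle}$-orbit $\mathcal{O}$ of $t$ satisfies $\mathcal{O}\setminus A\neq\emptyset$, and hence $g[\mathcal{O}]\setminus f[A]\neq\emptyset$, providing the required $t'$. Local finiteness and the equality of automorphism groups are then dispatched in a sentence each, just as you do.

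By contrast, you bypass back-and-forth entirely and build the extending automorphism in one shot, exploiting the identification $T\cong Y$ to write down the family $\{\sigma_{t,\alpha}\}$ explicitly. The verification that the constraints imposed by $\phi$ yield a finite partial bijection on each $N_t$ (well-definedness from the coordinate description of $Y$, injectivity from $p(a)=p(a')$ and injectivity of $\phi$) is clean, and closure of $A$ under $p$ is exactly what makes the level-by-level constraints cohere into $g\upharpoonright A=\phi$. What you gain is an explicit formula for the extension and no appeal to a back-and-forth schema; what the paper's route gains is brevity and independence from the coordinate machinery. Both are valid proofs of the same proposition.
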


\begin{proof}
We show that if $f:A \rightarrow B$ is an isomorphism between finite subsets of $T$, and $t \in T \setminus A$ is an immediate successor of some $a \in A$, then there exists $t' \in T$ such that $f \cup \{ (t,t') \}$ is an isomorphism. By the standard back-and-forth argument this implies ultrahomogeneity of $(T,p,\{P_t\}_{t \in \overline{T}})$.

Let $f:A \rightarrow B$, $t, a \in T$ be as above, and fix $g \in \Aut(T)$ with $g(a)=f(a)$. Let $\mathcal{O}$ be the orbit of $t$ under the action of the stabilizer $G_{\left\langle a \right\rangle}$ on $T$. Then $t$ witnesses that $\mathcal{O} \setminus A \neq \emptyset$, so there exists $t' \in g[\mathcal{O}] \setminus f[A]$. Clearly, $t'$ is as required.

Local finiteness follows form the fact that initial sets in $T$ are finite chains. It is also easy to see that the predicates $P_t$ do not affect automorphisms of $T$.
\end{proof}

\paragraph{\textbf{Ample generics}.}
For a countable structure $M$ in a fixed countable signature $L$, the family $\mathcal{K}={\rm Age}(M)$ is the family of all finite substructures of $M$. Also, for $n \in \NN$, the family $\mathcal{K}_p^n$ consists of all objects of the form
\[ \left\langle A, \phi_1:B_1 \rightarrow C_1, \ldots, \phi_n:B_n \rightarrow C_n \right\rangle, \]
where $A \in \mathcal{K}$, $B_i,C_i \subseteq A$, and $\phi_i$ are isomorphisms $\phi_i:B_i \rightarrow C_i$, $i \leq n$.

There is a natural notion of embedding associated with every $\mathcal{K}_p^n$. For
\[ \mathcal{S}=\left\langle A, \phi_1:B_1 \rightarrow C_1, \ldots, \phi_n:B_n \rightarrow C_n \right\rangle, \]
\[ \mathcal{T}=\left\langle D, \psi_1:E_1 \rightarrow F_1, \ldots, \psi_n:D_n \rightarrow F_n \right\rangle, \]
$f:A \rightarrow D$ embeds $\mathcal{S}$ into $\mathcal{T}$ if it is an embedding of $A$ into $D$ as structures, and
\[ f \circ \phi_i \subseteq \psi_i \circ f \]
for $i \leq n$.
 
We say that $\mathcal{K}_p^n$ satisfies the \emph{weak amalgamation property} (WAP) if for every $\mathcal{S} \in \mathcal{K}_p^n$ there exists $\mathcal{T} \in \mathcal{K}_p^n$, and an embedding $e: \mathcal{S} \rightarrow \mathcal{T}$ such that for every $\mathcal{F}, \mathcal{G} \in \mathcal{K}_p^n$, and embeddings $i: \mathcal{T} \rightarrow \mathcal{F}$, $j:\mathcal{T} \rightarrow \mathcal{G}$, there exists $\mathcal{E} \in \mathcal{K}_p^n$ and embeddings $k:\mathcal{F} \rightarrow \mathcal{E}$, $l:\mathcal{G} \rightarrow \mathcal{E}$ such that $\mathcal{E}$ amalgamates $\mathcal{F}$ and $\mathcal{G}$ over $\mathcal{T}$, that is,
\[ k \circ i \circ e=l \circ j \circ e.\]

The family $\mathcal{K}_p^n$ satisfies the \emph{joint embedding property} (JEP) if any two $\mathcal{S},\mathcal{T} \in \mathcal{K}_p^n$ can be embedded in some $\mathcal{E} \in \mathcal{K}_p^n$.

A Polish group $G$ has \emph{ample generics} if each diagonal action of $G$ on $G^n$, $n \in \NN$, by conjugation:
\[ g.(g_0,\ldots, g_n)=(gg_0g^{-1}, \ldots, gg_ng^{-1}), \]
$g, g_1, \ldots, g_n \in G$, has a comeagre orbit. We have (Theorem 6.2 from \cite{KeRo}):
\begin{theorem}
\label{thAmple}
Let $M$ be a countable, locally finite, ultrahomogeneous structure, $\mathcal{K}={\rm Age}(M)$, and $G={\rm Aut}(M)$. Then $G$ has ample generics if and only if $\mathcal{K}_p^n$ satisfies WAP and JEP for every $n \in \NN$.
\end{theorem}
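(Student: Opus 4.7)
The plan is to translate the diagonal conjugation action of $G$ on $G^n$ into combinatorial operations on the class $\mathcal{K}_p^n$ of $n$-systems of partial isomorphisms, and then extract comeagre orbits via a standard Baire category argument. For $\mathcal{S}=\langle A,\phi_1,\ldots,\phi_n\rangle \in \mathcal{K}_p^n$, introduce the basic open set
\[ U_{\mathcal{S}}=\{(g_1,\ldots,g_n)\in G^n: \phi_i\subseteq g_i \text{ for all } i\leq n\}. \]
Local finiteness of $M$ makes $\mathcal{K}_p^n$ countable and the sets $U_{\mathcal{S}}$ a basis for $G^n$, while ultrahomogeneity lets one realize any abstract extension $\mathcal{S}\hookrightarrow\mathcal{T}$ inside $M$. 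The conjugation action permutes basic sets via $g\cdot U_{\mathcal{S}}=U_{g\cdot\mathcal{S}}$, so $G$-orbits on $G^n$ correspond to cofinal families in the poset of embeddings in $\mathcal{K}_p^n$.

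For the JEP $\Leftrightarrow$ dense orbit correspondence I would enumerate $\mathcal{K}_p^n$ as $\mathcal{S}_0,\mathcal{S}_1,\ldots$ and run a back-and-forth, invoking JEP at each step to amalgamate the next $\mathcal{S}_k$ into the system built so far; this produces a tuple $\bar g\in G^n$ whose orbit meets every $U_{\mathcal{S}}$. Conversely, if some orbit is dense then any two $U_{\mathcal{S}}, U_{\mathcal{T}}$ share a conjugate of one fixed tuple, and its restriction to $A\cup B$ gives a joint embedding realizing JEP.

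For the WAP step, assuming WAP for every $n$, fix for each $\mathcal{S}$ a witness $\mathcal{T}(\mathcal{S})$ and let
\[ X=\{\bar g\in G^n: \forall \mathcal{S}\ [\,\bar g\in U_{\mathcal{S}} \Rightarrow \exists \mathcal{T}'\supseteq \mathcal{T}(\mathcal{S})\ \bar g\in U_{\mathcal{T}'}\,]\}. \]
This is a countable intersection of dense open sets, hence comeagre. A back-and-forth between any $\bar g,\bar h\in X$ builds the conjugator $g\in G$ step by step: at each stage WAP supplies a common extension of the current types of $\bar g$ and $\bar h$, which by ultrahomogeneity is realized inside $M$, letting one extend the partial conjugating map. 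Hence $X$ lies in a single orbit, which is therefore comeagre. The converse is easier: given a comeagre orbit $C$, for each $\mathcal{S}$ pick $\bar g\in C\cap U_{\mathcal{S}}$; its local type on a sufficiently large finite set is a WAP witness, because any two further extensions also fall in $C$ and hence are conjugate over that common part, producing the required amalgam $\mathcal{E}$.

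The main obstacle is the back-and-forth inside $X$: at each stage one must interleave extending the conjugating map with preserving the WAP-witness conditions for both tuples simultaneously, and turn abstract amalgams in $\mathcal{K}_p^n$ into genuine subsets of $M$ via ultrahomogeneity. WAP (rather than full AP) is precisely the hypothesis that keeps this bookkeeping feasible, since it only demands amalgamation over the chosen witness rather than over arbitrary systems, which is exactly what the induction produces.
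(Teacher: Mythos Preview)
The paper does not prove this statement: it is quoted as Theorem~6.2 of \cite{KeRo} and used as a black box, so there is no in-paper argument to compare against. Your outline is recognisably the Kechris--Rosendal/Ivanov approach (basic open sets $U_{\mathcal S}$, JEP $\leftrightarrow$ dense orbit, WAP $\leftrightarrow$ $G_\delta$ orbit via back-and-forth), so in substance you are reconstructing the cited source rather than offering an alternative.

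There is, however, a genuine gap in your converse sketch. You claim that for $\bar g\in C\cap U_{\mathcal S}$ and $\mathcal T$ the type of $\bar g$ on a sufficiently large finite set, ``any two further extensions also fall in $C$ and hence are conjugate over that common part''. The first clause is false: $C$ is comeagre, not all of $G^n$, so a given $\mathcal F\supseteq\mathcal T$ is realised by many tuples outside $C$. More importantly, even after picking $\bar g_1\in C\cap U_{\mathcal F}$ and $\bar g_2\in C\cap U_{\mathcal G}$ by density, the conjugator $h$ with $h\bar g_1h^{-1}=\bar g_2$ has no reason to fix the underlying set $A$ of $\mathcal S$, so the restriction of $\bar g_1$ to $F\cup h^{-1}[G]$ is not an amalgam \emph{over} $\mathcal S$. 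The missing ingredient is Effros' theorem (equivalently, a direct Baire-category argument): since $C$ is non-meagre, the orbit map $h\mapsto h\cdot\bar g$ is open onto $C$, so $G_{\langle A\rangle}\cdot\bar g$ is relatively open in $C$, and one may choose $\mathcal T\supseteq\mathcal S$ with $C\cap U_{\mathcal T}\subseteq G_{\langle A\rangle}\cdot\bar g$. Only then can the conjugator be taken in $G_{\langle A\rangle}$, which is exactly what forces the amalgam to sit over $\mathcal S$. (A smaller issue: your set $X$ in the forward direction is not obviously dense as written, since WAP does not let you amalgamate the fixed witness $\mathcal T(\mathcal S)$ with an arbitrary $\mathcal S'\supseteq\mathcal S$ over $\mathcal S$; the standard repair is to pass to the equivalent reformulation of WAP as cofinality of amalgamation bases and take $X$ to be the tuples all of whose finite types extend to such a base.)
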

 
More information on countable structures, families of their finite substructures, and ultrahomogeneity (all closely related to the notion of the Fra\"{i}ss\'e limit) can be found in \cite{Ho}.

\paragraph{\textbf{Groups}} A group $G$ has \emph{uncountable strong cofinality} if for any $A_0 \subseteq A_1 \subseteq \ldots$ such that $G=\bigcup_m A_m$, we have $A_m^k=G$ for some $m$, $k$. If $G=\bigcup_m G_m$ for some strictly increasing sequence of subgroups, that is, $G_0<G_1<\ldots$, then we say that $G$ has \emph{countable cofinality}. Finally, a topological group $G$ has the \emph{small index property} if any subgroup of index less than $2^{\aleph_0}$ is open in $G$.

\section{Uncountable strong cofinality}

%Recall that if $G$ is an uncountable group, then $G$ satisfies Property (FA) if $G$ is unsplittable and $G$ has uncountable cofinality, that is, we have that for any $G_0 \leq G_1 \leq \ldots$, if $G=\bigcup_n G_n$, then there exists $n_0$ such that $G=G_{n_0}$.
The first two lemmas are straightforward, so we omit their proofs.

\begin{lemma}
\label{le1}
Let $T$ be a tree, $\Aut(T)=Wr_{t \in \overline{T}} \, G_t$, and $\overline{S} \subseteq \overline{T}$ be a subtree of $\overline{T}$. Then $\overline{S}$ corresponds to an invariant subtree $S \subset T$, $\Aut(S)=Wr_{t \in \overline{S}} \, G_t$, and  $g \mapsto g_{\upharpoonright \overline{S}}$, $ g \in Wr_{t \in \overline{T}} \, G_t$, defines a continuous and surjective homomorphism $\phi:Wr_{t \in \overline{T}} \, G_t \rightarrow Wr_{t \in \overline{S}} \, G_t$.
\end{lemma}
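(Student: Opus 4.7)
The plan is to work throughout via the identification $T\cong Y$ from the preceding discussion, where $Y$ consists of tuples $(n_t)_{t\in I}$ indexed by initial segments $I$ of $\overline{T}$, with action of the wreath product governed by conditions (1)--(2) recalled in that discussion. First I would define the subtree $S\subset T$ corresponding to $\overline{S}$ as the set of tuples $(n_t)_{t\in I}$ where $I$ is an initial segment of $\overline{T}$ lying entirely in $\overline{S}$. Since $\overline{S}$ is closed under the predecessor function in $\overline{T}$, these $I$ are exactly the initial segments of $\overline{S}$, and $S$ is closed under predecessors in $T$.

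The next step is invariance: for $g \in Wr_{t\in\overline{T}}\,G_t$ and $x=(n_t)_{t\in I}\in S$, the characterization ``$g((n_t)_{t\in I})=(m_t)_{t\in J}$ implies $I=J$'' immediately gives $g(x)=(m_t)_{t\in I}\in S$. Consequently $g\upharpoonright S$ is a well-defined permutation of $S$, and $g\mapsto g\upharpoonright S$ is a group homomorphism. That the restriction actually lies in $Wr_{t\in\overline{S}}\,G_t$ is a direct check of conditions (1) and (2): these conditions for the restriction, tested at some $t\in\overline{S}$, are instances of the same conditions for $g$ tested at $t\in\overline{T}$, since predecessors of elements of $\overline{S}$ lie in $\overline{S}$ and the coordinates involved are the same.

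For surjectivity, given $h\in Wr_{t\in\overline{S}}\,G_t$, I would extend $h$ to $g$ on $Y$ by acting as $h$ on the $\overline{S}$-coordinates and as the identity elsewhere: for $(n_t)_{t\in I}\in Y$, set $I'=I\cap\overline{S}$ and
\[ g((n_t)_{t\in I})(s)=\begin{cases} h((n_t)_{t\in I'})(s) & \text{if } s\in I', \\ n_s & \text{if } s\in I\setminus\overline{S}. \end{cases} \]
Conditions (1) and (2) for $g$ split according to whether the relevant $t\in\overline{T}$ lies in $\overline{S}$ or not: on $\overline{S}$ they follow from the corresponding conditions for $h$, and off $\overline{S}$ they hold trivially since $g$ does not alter those coordinates (one can take $\sigma=\mathrm{id}$ in condition (2)). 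By construction $g\upharpoonright S=h$, so $\phi$ is surjective.

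Finally, continuity of $\phi$ is immediate from the pointwise convergence topology: a basic neighborhood of $g\upharpoonright S$ is determined by agreement on a finite set $F\subset S\subset T$, and the preimage is the neighborhood of $g$ determined by agreement on the same finite set. No step here presents a genuine obstacle; the whole lemma is an exercise in translating between the wreath product definition and the action on $Y$, which is exactly why the authors are content to omit the proof. If there is a subtlety, it lies in making sure the extension used for surjectivity preserves condition (2) for every $t\in\overline{T}\setminus\overline{S}$ simultaneously, and the ``act by identity off $\overline{S}$'' recipe handles this uniformly.
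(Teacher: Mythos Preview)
The paper omits the proof of this lemma entirely, declaring it ``straightforward.'' Your argument correctly supplies the details the authors had in mind: working through the identification $T\cong Y$, checking invariance via the index-preservation condition, and obtaining surjectivity by extending $h$ to act trivially on coordinates outside $\overline{S}$. The only expository wrinkle is that you invoke both versions of ``conditions (1)--(2)'' (the original wreath-product conditions on $X=\prod N_t$ involving $\sigma$, and the $Y$-conditions involving preservation of $I$ and compatibility with restriction) somewhat interchangeably; since $G_t=\Sym(N_t)$ here, the $Y$-conditions alone already characterize membership in the wreath product, so mentioning $\sigma=\mathrm{id}$ is harmless but slightly redundant. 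There is no genuine gap.
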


\begin{lemma}
\label{le2}
Let $T=\{r\} \cup \{ t_i \}$ be a tree such that $r$ is the immediate predecessor of each $t_i$, and $G_t$, $t \in T$, be permutation groups. Then $Wr_{t \in T} \, G_t$ is isomorphic to $(\otimes_i G_{t_i}) Wr \, G_r$.
\end{lemma}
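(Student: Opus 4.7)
The plan is to directly unpack both sides as explicit permutation groups on $X = \prod_{t \in T} N_t = N_r \times \prod_i N_{t_i}$ and read off a bijection that is visibly a homomorphism.

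On the side $W := Wr_{t \in T} \, G_t$: for $g \in W$, condition (1) of the definition, applied at each $t_i$ (whose only strict predecessor is $r$), forces $g(x)(r)$ to depend on $x$ only through $x(r)$; condition (2) at $r$ (whose set of strict predecessors is empty) then supplies a single $\sigma_g \in G_r$ with $g(x)(r) = \sigma_g(x(r))$ for all $x$. Condition (2) at each $t_i$ supplies a permutation $\tau^g_i(x(r)) \in G_{t_i}$, depending only on the restriction of $x$ to strict predecessors of $t_i$, i.e.\ only on $x(r)$, such that $g(x)(t_i) = \tau^g_i(x(r))(x(t_i))$. Hence each $g \in W$ is uniquely encoded by a pair $\bigl(\sigma_g, (\tau^g_i)_i\bigr)$ with $\sigma_g \in G_r$ and $(\tau^g_i)_i \in (\otimes_i G_{t_i})^{N_r}$, and every such pair arises from some $g \in W$.

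On the side $H := (\otimes_i G_{t_i}) Wr \, G_r$: by the description of the standard wreath product recalled in the paper, $H$ acts faithfully on $(\prod_i N_{t_i}) \times N_r$, and every $h \in H$ factors uniquely as $b \cdot \sigma$ with $\sigma \in G_r$ and $b \in (\otimes_i G_{t_i})^{N_r}$: here $\sigma$ permutes only the $N_r$-coordinate, while $b$ fixes the $N_r$-coordinate and over each fiber $n \in N_r$ moves the $\prod_i N_{t_i}$-coordinate by $b(n) \in \otimes_i G_{t_i}$ acting coordinatewise. After the natural identification of $X$ with $(\prod_i N_{t_i}) \times N_r$, $H$-elements are therefore encoded by exactly the same data as $W$-elements.

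The assignment sending a $W$-element to the $H$-element with the same encoding is thus a bijection. That it is a group isomorphism is a short direct calculation: evaluating $(g \circ g')(x)(r)$ and $(g \circ g')(x)(t_i)$ from the $W$-formulas yields exactly the standard wreath-product multiplication formula on the $H$-side. The only real obstacle is bookkeeping --- in particular, fixing conventions so that in the composition $\tau^g_i$ is evaluated at the right version of $x(r)$ (namely $\sigma_{g'}(x(r))$ on one side, $x(r)$ on the other), so that the two composition rules agree on the nose.
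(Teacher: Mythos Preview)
The paper gives no proof of this lemma (it says ``The first two lemmas are straightforward, so we omit their proofs''), so there is nothing to compare your argument to; your approach of unpacking both sides as permutation groups on $X=N_r\times\prod_i N_{t_i}$ and matching the data is the natural one.

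That said, one step is not justified. You write that condition (2) at $t_i$ ``supplies a permutation $\tau^g_i(x(r)) \in G_{t_i}$, depending only on the restriction of $x$ to strict predecessors of $t_i$, i.e.\ only on $x(r)$''. Condition (2) only says that for each fixed $x$ the map $j\mapsto g(x^{t_i}_j)(t_i)$ lies in $G_{t_i}$; nothing in (1) or (2), read literally, rules out dependence on $x(t_k)$ for $k\neq i$. Concretely, take two leaves $t_1,t_2$, $N_r=\{0\}$, $N_{t_1}=N_{t_2}=\{0,1\}$, $G_{t_1}=G_{t_2}=S_2$: the transposition of $(0,0)$ and $(0,1)$ in $N_{t_1}\times N_{t_2}$ satisfies (1) and (2) verbatim, yet is not in $G_{t_1}\times G_{t_2}$. (Worse, composing it with the transposition of $(0,1)$ and $(1,1)$ yields a $3$-cycle that fails (2), so (1)--(2) as written do not even cut out a subgroup.)

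What this reveals is that the paper's condition (1) is stated too weakly for non-linear $T$; Holland's generalized wreath product requires that $g(x)(t)$ depend only on $x\upharpoonright\{s:s\leq t\}$, i.e.\ condition (1) should use $\leq$ rather than $<$. Under that intended reading your $\tau^g_i$ really does depend only on $x(r)$ and the rest of your argument goes through as you describe. You should flag this explicitly rather than passing over it as bookkeeping.
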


\begin{lemma}
\label{le3}
Let $T$ be a finite tree, and let $G_t$, $t \in T$, be permutation groups. If every $G_t$, $t \in T$, has uncountable strong cofinality (in, particular, if $G_t$ is finite), then $Wr_{t \in T} \, G_t$ has uncountable strong cofinality.
\end{lemma}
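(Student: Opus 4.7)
My plan is induction on $|T|$. The base case $|T|=1$ is immediate: $Wr_{t\in T}G_t = G_r$, which has uncountable strong cofinality by hypothesis. For the inductive step, let $r$ be the root of $T$ and $t_1,\ldots,t_k$ its children (finitely many as $T$ is finite), with $T_i:=T_{t_i}$. By induction each $H_i:=Wr_{t\in T_i}G_t$ has uncountable strong cofinality, and iterating Lemma \ref{le2} along the tree gives the factorisation
\[
Wr_{t\in T}G_t \;\cong\; \bigl(\otimes_{i=1}^k H_i\bigr)\,Wr\,G_r.
\]
This reduces the inductive step to two sub-claims: (i) uncountable strong cofinality is preserved by finite direct products; and (ii) the standard wreath product $H\,Wr\,G_r$ inherits uncountable strong cofinality from its factors $H$ and $G_r$.

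Claim (i) follows from a routine extension principle. For a short exact sequence $1\to N\to G\to Q\to 1$ with $N,Q$ both having uncountable strong cofinality and $G=\bigcup_m A_m$ (after symmetrising, WLOG $A_m=A_m^{-1}$ and $1\in A_m$), strong cofinality of $Q$ yields $m_0,k_0$ with $A_{m_0}^{k_0}N=G$; the increasing family $\{N\cap A_m^{k_0+1}\}_m$ then exhausts $N$, so strong cofinality of $N$ produces $m_1,k_1$ with $N\subseteq A_{m_1}^{(k_0+1)k_1}$; combining gives $A_{\max(m_0,m_1)}^{k_0+(k_0+1)k_1}=G$. Iterated application of this principle to $H_1\times H_2$, then $(H_1\times H_2)\times H_3$, and so on, yields (i).

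For (ii) the split extension $1\to H^{N_r}\to H\,Wr\,G_r\to G_r\to 1$ invites applying the same extension principle, and it works smoothly when $N_r$ is finite (the base group is then a finite direct product and (i) applies). When $N_r$ is infinite, however, the base group $H^{N_r}$ is an infinite direct product, which need not inherit strong cofinality from $H$ in general; so the extension principle cannot be invoked blindly on the base group in isolation. Instead one has to work inside the full wreath product and use the interaction between the two factors. Given $G=\bigcup A_m$, first extract $m_0,k_0$ with $A_{m_0}^{k_0}H^{N_r}=G$ from the quotient map; for a fixed coordinate $n_0$ use strong cofinality of the copy $H^{(n_0)}\cong H$ to bound it inside some $A_{m_1}^{K_1}$; then use the conjugation action of $G_r$, with conjugating elements drawn from $A_{m_0}^{k_0}$ (available thanks to the strong cofinality of $G_r$ in its own $A_m$-coverings), to transport this bound to every coordinate copy $H^{(n)}$. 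The delicate step, and the main obstacle, is to assemble these coordinatewise bounds into a single bound on an arbitrary element of $H^{N_r}$ — which in general has infinite support, so that a naive triangle-inequality argument along the support fails. This forces a Bergman-style combinatorial argument that exploits both the commuting structure of the coordinate factors and the abundance of $G_r$-conjugations available inside the $A_m$'s; it is precisely at this assembly step that the wreath structure (rather than a pure direct product) is essential.
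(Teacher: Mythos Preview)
Your induction peels the tree from the root downwards, whereas the paper peels from the leaves upwards: it picks a vertex $t_0$ all of whose successors $t_1,\dots,t_n$ are leaves, replaces $G_{t_0}$ by $G'_{t_0}=(\otimes_{i=1}^n G_{t_i})\,Wr\,G_{t_0}$, and applies the inductive hypothesis to the smaller tree $T\setminus\{t_1,\dots,t_n\}$. Either direction reduces to exactly the sub-claim you isolate as (ii): that $A\,Wr\,B$ inherits uncountable strong cofinality from $A$ and $B$. The paper dispatches this in one word (``Clearly''); you attempt to justify it and stall at the assembly step when the set $N_r$ on which $G_r$ acts is infinite.

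You are right to stall, and the difficulty is not merely technical. Take $T=\{r,t\}$ with $t$ the unique child of $r$, let $N_r=\NN$ with $G_r$ the trivial group (finite, hence of uncountable strong cofinality), and let $G_t=\mathbbm{Z}_2$. Then $Wr_{s\in T}G_s\cong(\mathbbm{Z}_2)^{\NN}$, which by the paper's own Lemma~\ref{le3.5} has countable cofinality. So no ``Bergman-style combinatorial argument'' can salvage (ii) in this generality; the lemma as stated needs an extra hypothesis, for instance that $N_t$ is finite whenever $t$ is not a leaf. Under that hypothesis your extension argument (claim~(i) applied to the finite product $H^{N_r}$) already finishes the proof, and this hypothesis is exactly what holds in the paper's only application of the lemma (Theorem~\ref{ThM1}), where each non-leaf $G_s$ is a finite symmetric group acting on a finite set.
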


\begin{proof}
This is an easy induction on the size of $T$. Let $t_0 \in T$ be such that all successors of $t_0$, say $t_1, \ldots, t_n$, are leaves in $T$. Let $T'=T \setminus \{t_1, \ldots, t_n \}$, and let $G'_{t_0}=(\otimes_{i=1}^n G_{t_i}) Wr \, G_{t_0}$, $G'_t=G_t$ if $t \in T'$, $t \neq t_0$. Clearly, $G_{t_0}$ has uncountable strong cofinality, and $\left| T' \right|<\left| T \right|$. By induction hypothesis, $Wr_{t \in T'} \, G'_t$ has uncountable strong cofinality, and Lemma \ref{le2} implies that $Wr_{t \in T'} \, G'_t$ is isomorphic to $Wr_{t \in T} G_t$ 
\end{proof}

The next lemma contains folklore facts.

\begin{lemma}
\label{le3.5}
The group $(\mathbbm{Z}_2)^\NN$ has countable cofinality, and does not have the small index property. Therefore, every separable, and completely metrizable topological group $G$ that maps homomorphically, continuously and surjectively onto $(\mathbbm{Z}_2)^\NN$ has countable cofinality, and does not have the small index property.
\end{lemma}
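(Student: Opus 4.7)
The plan is to establish the three claims about $(\mathbbm{Z}_2)^\NN$ separately, and then transfer each property to $G$ via the given continuous surjective homomorphism $\phi:G\to(\mathbbm{Z}_2)^\NN$.

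First, to show countable cofinality of $V:=(\mathbbm{Z}_2)^\NN$, I would view $V$ as an infinite-dimensional vector space over the two-element field. Fix a Hamel basis $B$ and partition it into countably many nonempty pieces $B=\bigsqcup_n B_n$. Setting $V_n=\mathrm{span}\bigl(\bigcup_{k\le n} B_k\bigr)$ yields a strictly increasing chain of proper subspaces of $V$: strictness is immediate since $V_n$ misses the basis elements of $B_{n+1}$, and $\bigcup_n V_n=V$ because every vector is a finite linear combination of basis vectors.

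Next, for the failure of the small index property of $(\mathbbm{Z}_2)^\NN$, I would exhibit a proper, non-open subgroup of index $2$. The direct sum $H_0=(\mathbbm{Z}_2)^{(\NN)}\subset(\mathbbm{Z}_2)^\NN$ is a dense proper subspace, so (via a Hamel basis) one can extend $H_0$ to a hyperplane $K$ of codimension $1$; then $K$ has index $2$ and is dense because $K\supseteq H_0$. Since any open subgroup of a topological group is closed, a dense proper subgroup cannot be open, so $K$ witnesses the failure of the small index property.

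Finally, I would transfer both properties to $G$ via $\phi$. If $(\mathbbm{Z}_2)^\NN=\bigcup_m H_m$ is a strictly increasing chain of subgroups, then $G=\bigcup_m \phi^{-1}(H_m)$ is also strictly increasing (surjectivity of $\phi$ preserves strictness), so $G$ has countable cofinality. For the small index property, the open mapping theorem for Polish groups ensures that $\phi$ is an open map; hence if $K$ is the non-open index-$2$ subgroup above, then $\phi^{-1}(K)\leq G$ has index $2$, and if it were open then $K=\phi(\phi^{-1}(K))$ would be open too, a contradiction. The only nonroutine input is thus the open mapping theorem; the rest is elementary modulo an appeal to the axiom of choice for the Hamel basis.
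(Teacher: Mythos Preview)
Your argument is correct and essentially the same as the paper's: the Hamel-basis chain for countable cofinality and the transfer via the open mapping theorem are identical, and the only cosmetic difference is that the paper obtains its dense index-$2$ subgroup as the complement of a non-principal ultrafilter on $\NN$ (which is itself a hyperplane containing $(\mathbbm{Z}_2)^{(\NN)}$), whereas you extend $(\mathbbm{Z}_2)^{(\NN)}$ to an arbitrary hyperplane.
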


\begin{proof}
Select a Hamel basis $B$ for $(\mathbbm{Z}_2)^\mathbbm{N}$ regarded as a linear space, and build a countable strictly increasing sequence $B_0 \subsetneq B_1 \subsetneq \ldots$ such that $B=\bigcup_n B_n$. The linear spaces $H_n$ generated by $B_n$ are groups witnessing countable cofinality of $(\mathbbm{Z_2})^\NN$.

Now, let $K$ be the complement of a non-principal ultrafilter on $\NN$. Then $K$ is a subgroup of $(\mathbbm{Z}_2)^\NN$ of index $2$, dense in $(\mathbbm{Z}_2)^\NN$, and so it cannot be open. Thus, $K$ witnesses that $(\mathbbm{Z})^\NN$ does not have the small index property.

If $G$ is separable and completely metrizable, and $\phi:G \rightarrow (\mathbbm{Z}_2)^\NN$ is homomorphic, continuous and surjective, then $\phi$ is open (see Theorem 2.3.3 in \cite{Gao}), so $\phi^{-1}[K]$ is not open in $G$, and has index $2$ in $G$. Also, groups $\phi^{-1}[H_n]$ form a strictly increasing sequence, whose union is $G$. 
\end{proof}

\begin{lemma}
\label{le4}
Let $S_0$ be a  tree defined by
\begin{enumerate}
\item  $S_0$ is an infinite branch, or
\item  $S_0=\{t_0, \ldots, t_n \} \cup \{s_0,s_1,\ldots \}$, where $r=t_0,t_1, \ldots, t_n$ is the unique path joining the root $r$ and the only non-trivially branching element $t_n$, and $t_n$ is the immediate predecessor of each $s_0, s_1, \ldots$.
\end{enumerate}
Suppose that $T$ is a tree, $\Aut(T)=Wr_{t \in \overline{T}} \, G_t$, and $\overline{T}$ contains a subtree $S_0$ as above such that each $G_t$, $t \in S_0$, is finite. Then $\Aut(T)$ has countable cofinality, and does not have the small index property.
\end{lemma}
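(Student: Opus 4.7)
My plan is to apply Lemma~\ref{le3.5} by constructing a continuous surjective homomorphism $\psi:\Aut(T)\to(\mathbbm{Z}_2)^\NN$. By Lemma~\ref{le1}, the restriction map is a continuous surjection $\Aut(T)\to Wr_{t\in S_0}G_t$, so it suffices to build $\psi$ on $Wr_{t\in S_0}G_t$.

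The main building block is a sign character at each non-minimal $t\in S_0$. Let $A_t=\prod_{s\in S_0,\,s<t}N_s$, which is a finite set. For $g\in Wr_{t\in S_0}G_t$ and $a\in A_t$, the wreath product definition yields a permutation $\sigma^{(t)}_a(g)\in G_t$ describing how $g$ acts on the $t$-coordinate when the ancestors are at state $a$. I define
\[
\chi_t(g)=\prod_{a\in A_t}\mathrm{sgn}\bigl(\sigma^{(t)}_a(g)\bigr)\in\mathbbm{Z}_2.
\]
Continuity is immediate, since $\chi_t(g)$ depends on $g$ only through its action on finitely many nodes of $S$. The central step is showing that $\chi_t$ is a homomorphism. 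Unpacking the wreath product composition gives $\sigma^{(t)}_a(gh)=\sigma^{(t)}_{\tilde h(a)}(g)\cdot\sigma^{(t)}_a(h)$, where $\tilde h$ is the bijection of $A_t$ induced by $h$'s action on the level of $S$ corresponding to $A_t$. Taking signs and multiplying over $a$, the shift by $\tilde h$ is absorbed by reindexing, so $\chi_t(gh)=\chi_t(g)\chi_t(h)$.

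To finish, in case~(1) I apply this construction to $t=t_k$ for each $k\geq 1$ with $|N_{t_k}|\geq 2$, and in case~(2) to $t=s_i$ for each $i$ with $|N_{s_i}|\geq 2$; in either case infinitely many indices are available (otherwise $Wr_{t\in S_0}G_t$ would be finite and the conclusion false, so this is forced by the intended application of the lemma). The combined map $\psi=(\chi_t)_t:Wr_{t\in S_0}G_t\to(\mathbbm{Z}_2)^\NN$ is continuous. For surjectivity, given any prescribed sign pattern, I build $g$ by fixing one state $a_t^*\in A_t$ for each chosen $t$ and setting $\sigma^{(t)}_{a_t^*}(g)\in G_t=\Sym(N_t)$ to be a permutation of the required sign, with all other wreath product parameters trivial. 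Since the wreath product parameters at distinct nodes and distinct states are chosen independently, this realises any prescribed value of $(\chi_t(g))_t$ and completes the argument.

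The main obstacle is the homomorphism verification: because wreath product composition is genuinely semidirect rather than direct, the ancestor state $a$ gets shifted by $\tilde h$ upon composition, and a naive coordinate-wise sign map would fail. The argument succeeds precisely because $A_t$ is \emph{finite} and $\tilde h$ permutes it, so the product of signs over $a$ is invariant under the shift.
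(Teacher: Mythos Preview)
Your proof is correct and follows the same overall strategy as the paper: reduce via Lemma~\ref{le1} to $Wr_{t\in S_0}G_t$, construct a continuous surjective homomorphism onto $(\mathbbm{Z}_2)^\NN$ out of sign characters, and invoke Lemma~\ref{le3.5}. Your composition formula $\sigma^{(t)}_a(gh)=\sigma^{(t)}_{\tilde h(a)}(g)\,\sigma^{(t)}_a(h)$ and the reindexing argument over the finite set $A_t$ are exactly the mechanism the paper uses in case~(2), where your $\chi_{s_i}$ coincides with the paper's $\phi(\cdot)_i=\mathrm{sgn}(g^0_i\cdots g^N_i)$.

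The only genuine difference is in case~(1). The paper restricts to the finite truncations $H_n=Wr_{s\in\{s_0,\ldots,s_n\}}G_s$ and takes the sign of the resulting permutation of $\prod_{i\le n}N_{s_i}$; you instead take, at each level $s_k$, the product of the fiber signs $\prod_{a\in A_{s_k}}\mathrm{sgn}(\sigma^{(s_k)}_a(g))$. These characters are related by $\mathrm{sgn}(g|_{H_{n}})=\mathrm{sgn}(g|_{H_{n-1}})^{|N_{s_n}|}\cdot\chi_{s_n}(g)$, so either family generates a surjection onto $(\mathbbm{Z}_2)^\NN$. Your version has the advantage of handling both tree shapes with a single construction and a single homomorphism verification, whereas the paper argues the two cases separately. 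Your remark that infinitely many $|N_t|\ge 2$ are needed is the same point the paper makes when it says ``without loss of generality all $G_s$ are non-trivial.''
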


\begin{proof}
Suppose that $\overline{T}=S_0$. Without loss of generality, we can assume that all $G_s$, $s \in S_0$, are non-trivial. Suppose that $S_0$ is an infinite branch. Let $S_0=\{s_0,s_1,\ldots \}$ be the increasing enumeration of $S_0$, and let $H_n=Wr_{s \in \{s_0, \ldots, s_n \}} \, G_s$. Each $H_n$ is a finite permutation group of a finite set, so each of its elements $g$ can be homomorphically assigned its sign, ${\rm sgn}(g)$. In this manner, we get a continuous and surjective homomorphism $\Aut(T) \rightarrow (H_n)^\NN \rightarrow (\mathbbm{Z}_2)^\NN$, so $Wr_{s \in S_0} \, G_t$ has countable cofinality and does not have the small index property by Lemma \ref{le3.5}.

Let us consider the other case now.
Let $H=G_{t_n} Wr \ldots Wr \, G_{t_0}$. By Lemma \ref{le2}, the group $G$ can be written as $G=(\otimes_n G_{s_n}) Wr \, H$, where $H$ is a group of permutations of a finite set of size $N+1$, and $G_{s_n}$ are symmetric groups. Hence, every element $g \in G^{base}$ is of the form
\[ ((g^0_0,g^0_1,\ldots),(g^1_0,g^1_1,\ldots),\ldots ,(g^N_0,g^N_1 \ldots)), \]
where $g^i_j \in G_{s_i}$. Therefore, we can define $\phi: G \rightarrow (\mathbbm{Z}_2)^\NN$ by 
\[ \phi(hg)=({\rm sgn}(g^0_n\ldots g^N_n))_{n \in \NN} \]
for $h \in H$, $g \in G^{base}$. The mapping $\phi$ is a homomorphism. To see this, note that for every $h \in H$, $g \in G^{base}$, $gh=h\bar{g}$, where $\bar{g}$ is a coordinate permutation of $g$ of the form
\[ ((g^{i_0}_0,g^{i_0}_1,\ldots),(g^{i_1}_0,g^{i_1}_1,\ldots),\ldots ,(g^{i_N}_0,g^{i_N}_1 \ldots)), \]
so
\[ \phi(h_0g_0h_1g_1)=\phi(h_0h_1\bar{g}_0g_1)=\phi(\bar{g}_0g_1)=\phi(g_0g_1)=\phi(h_0g_0)\phi(h_1g_1) \] 
for every $h_0,h_1 \in H$, $g_0,g_1 \in G^{base}$.

As all $G_s$, $s \in S_0$, are symmetric groups, it is also surjective, so, as before, $G$ can be homomorphically, continuously, and surjectively mapped onto $(\mathbbm{Z}_2)^\NN$.

If $S_0$ is a subtree of $\overline{T}$, then by Lemma \ref{le1}, $Wr_{t \in \overline{T}} \, G_t$ maps homomorphically, continuously and surjectively onto $Wr_{t \in S_0} \, G_t$. An application of Lemma \ref{le3.5} finishes the proof. 
\end{proof}

Now we prove the main technical lemma.

\begin{lemma}
\label{leMain}
Suppose that $G=\otimes_{n=0}^N (G_n)^\NN$, where $G_n$ are any groups, $N \in \{0,1, \ldots, \NN\}$, and $G=\bigcup_m A_m$, where $A_0 \subseteq A_1 \subseteq \ldots$ are subsets of $G$. Suppose also that the following condition is satisfied:

there exists $l$ such that if $g=(g_0,g_1\ldots) \in A^k_m$ for some $m$, $k$, and $\bar{g}=(\bar{g}_0,\bar{g}_1\ldots) \in G$ is such that each $\bar{g}_n \in G_n^{\NN}$ is a coordinate permutation of $g_n$, then $\bar{g} \in A_m^{k+l}$.

Then $G=A_m^k$ for some $m, k$.
\end{lemma}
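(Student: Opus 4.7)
The plan is to argue by contradiction and pack all potential obstructions into a single element of $G$ whose presence in the chain forces a uniform bound.

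First I would replace each $A_m$ by its symmetrization $A_m \cup A_m^{-1} \cup \{e\}$, which preserves the chain and the coordinate-permutation hypothesis up to enlarging $l$ by a fixed additive constant; so I may assume each $A_m$ is symmetric and contains the identity. Supposing for contradiction that $G \neq A_m^k$ for every $(m,k)$, fix witnesses $g_{m,k} \in G \setminus A_m^k$ and write $g_{m,k}=(g_{m,k}^n)_n$ with $g_{m,k}^n \in (G_n)^\NN$. Using the large index set of each factor, partition $\NN = E^n \sqcup \bigsqcup_{(m,k)} J_{m,k}^n$, where $E^n$ is a dedicated identity block and each $J_{m,k}^n$ is put in bijection with $\NN$ via a chosen $\phi_{m,k}^n$. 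Define $g^* \in G$ by declaring $(g^*)_n$ to be the identity on $E^n$ and to equal $g_{m,k}^n(j)$ at the position $\phi_{m,k}^n(j)$. Since $G=\bigcup_m A_m$, some $m_0$ satisfies $g^* \in A_{m_0}$.

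The crux is to show that there is a constant $C=C(l)$, independent of $(m,k)$, such that every $g_{m,k}$ lies in $A_{m_0}^C$. I would try to realize each $g_{m,k}$ as a bounded-length product of coordinate permutations of $g^*$ and their inverses: in each factor $n$ independently, one designs bijections $\sigma_n:\NN \to \NN$ so that $(g^*)_n \circ \sigma_n$ aligns $J_{m,k}^n$ with the leading positions of $\NN$, reproducing $g_{m,k}^n$ there, while a small number of companion coordinate permutations align the remaining blocks either with themselves (so they cancel pairwise in the product) or with $E^n$ (so their contribution becomes trivial). The essential structural features making this work are the identity padding $E^n$, which simultaneously provides cancellation targets and the room to keep each $\sigma_n$ an honest bijection of $\NN$, and the fact that coordinate permutations may be chosen independently in each factor $n$, reflecting the tensor structure $G=\otimes_n (G_n)^\NN$. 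By the hypothesis each coord-permuted copy of $g^*$ lies in $A_{m_0}^{1+l}$, so a product of boundedly many such copies lies in $A_{m_0}^C$ for a constant $C$ depending only on $l$.

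Once such a $C$ is in hand, the contradiction is immediate: for $(m,k)=(m_0, C+1)$ we obtain $g_{m_0,C+1} \in A_{m_0}^C \subseteq A_{m_0}^{C+1}$, contradicting $g_{m_0,C+1} \notin A_{m_0}^{C+1}$. The main obstacle is precisely the extraction step of the previous paragraph: one must specify the factor-wise bijections $\sigma_n$ so that in every factor $(G_n)^\NN$ simultaneously, a bounded-length product of coord-permuted copies of $g^*$ cancels all blocks other than $J_{m,k}^n$ and reconstitutes $g_{m,k}^n$ there. Everything else in the proof is bookkeeping: the reduction to symmetric $A_m$, the construction of $g^*$ from the witnesses, and the final pigeonhole against $m_0$.
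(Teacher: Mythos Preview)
Your overall strategy---assume failure, pack the witnesses into a single element, locate it in some $A_{m_0}$, then extract each witness as a bounded product of coordinate permutations---is exactly the paper's strategy. The paper's final pigeonhole (choosing $C_m\notin B_m$ and considering $\bigcup_m C_m$) is the same ``pack all obstructions'' idea in slightly different clothing. So the architecture is right.

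The difficulty is precisely where you locate it: the extraction step. With your packing (one copy of each $g_{m,k}^n$ in a block $J_{m,k}^n$, plus an identity block $E^n$), the two cancellation mechanisms you propose are in tension with the requirement that each $\sigma_n$ be a \emph{bijection} of $\NN$. Concretely, suppose you try a two-term product $(g^*)^{\sigma}\cdot((g^*)^{\tau})^{-1}$ and split $\NN=P\sqcup Q$: on $P$ you want $\sigma_n(p)=\phi^n_{m,k}(p)$ and $\tau_n(p)\in E^n$, and on $Q$ you want $\sigma_n(q)=\tau_n(q)$ to force cancellation. Then $\tau_n[Q]=\sigma_n[Q]=\NN\setminus\phi^n_{m,k}[P]$, so surjectivity of $\tau_n$ forces $\tau_n[P]=\phi^n_{m,k}[P]\subseteq J^n_{m,k}$, contradicting $\tau_n[P]\subseteq E^n$. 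Three-term variants run into the analogous obstruction: once one permutation is committed to hitting $J^n_{m,k}$ on a set, the companion permutations cannot simultaneously land in $E^n$ there and still cover the rest of $\NN$. The problem is that a coordinate permutation of $g^*$ carries the \emph{entire} multiset of values of $g^*$, including every foreign block $J^n_{m',k'}$, exactly once; with only one copy of each value available you cannot route these unwanted contributions into cancellation without breaking bijectivity somewhere.

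The paper's fix is to choose the packed element differently: for each countable $C\subseteq\bigcup_n G_n$ it builds $g^C$ whose $n$-th component contains \emph{infinitely many} copies of every element of $(C\cap G_n)\cup(C\cap G_n)^{-1}\cup\{e\}$. This abundance is exactly what makes the bijection constraints compatible with cancellation: one partitions the index set into three pieces $I,I',I''$ with $g^C$ restricted to $I$ a rearrangement of the target $g$, to $I'$ a rearrangement of $g^C$ itself, and to $I''$ identically $e$; then a single further coordinate permutation $g^C_\sigma$ makes $g^C\cdot g^C_\sigma$ a rearrangement of $g$. Your construction can be repaired the same way---replace each block $J^n_{m,k}$ by infinitely many copies of $g_{m,k}^n$ together with its inverse and the identity---after which the paper's three-block argument applies verbatim and your contradiction closes.
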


\begin{proof}
Without loss of generality we can assume that groups $G_n$ are pairwise disjoint, and that $e \in A_0$. We start with a claim.

\emph{Claim}. There exist $k$ and $g^C \in G$, $C$ is a countable subset $C$ of $\bigcup_n G_n$ (that is, $C \in [\bigcup_n G_n]^{\omega}$), such that if $g^C \in A_m$ and $g \in G$ with range$(g) \subseteq C$ then $g \in A^k_m$.

In the proof of the claim, by saying that $g_{\upharpoonright I}$ is a coordinate permutation of $h$, where $g, h \in G$, $I \subseteq \NN^N$, we mean, abusing terminology slightly, that there exists a bijection $f :I \rightarrow \NN^N$ such that $(g_{f(i)})=h$.

For $C$ as above, take $g^C=(g^C_0,g^C_1, \ldots)$ to be some fixed element such that each $g^C_n$ contains infinitely many copies of every element from $(C \cap G_n) \cup (C \cap G_n)^{-1} \cup \{ e \}$.

Suppose first that $g=(g_0,g_1,\ldots) \in G$ is such that each $g_n$ has infinitely many coordinates equal to the identity.

We can partition $\NN^N$ into $3$ infinite subsets $I,I',I''$ such that $g^C_{\upharpoonright I}$ is a coordinate permutation of $g$, $g^C_{\upharpoonright I'}$ is a coordinate permutation of $g^C$, and $g^C_{\upharpoonright I''}$ is the identity. Then, by the definition of $g^C$, the element $g^C_{\upharpoonright I \cup I'}$ is a coordinate permutation of $(g^C)^{-1}$, and $g^C_{\upharpoonright I' \cup I''}$ is a coordinate permutation of $g^C$. Fix a permutation $\sigma$ of $\NN^N$ such that $\sigma[I'']=I$, $\sigma[I \cup I' ]=I' \cup I''$, and $(g^C_\sigma)_{\upharpoonright I' \cup I''}=(g^C)^{-1}_{\upharpoonright I' \cup I''}$, where $g^C_\sigma$ is a coordinate permutation of $g^C$ induced by $\sigma$. Then, by our assumption, $g^C_\sigma \in A_m^{1+l'}$ for some $l'$, $g^C g^C_\sigma$ is a coordinate permutation of $g$, and $g^C g^C_\sigma \in A^{2+l'}_m$, so, by our assumption again, $g \in A^{2+l'+l''}_m$ for some $l''$. Here $l', l''$ are independent of the choice of $C$ and $g$.

Since any $g \in G$ can be expressed as a product of two elements as above, $g \in A_m^k$, if $k \geq 2(2+l'+l'')$ and range$(g) \subset C$.

%Let $I$ be a fixed set of coordinates of $g^C$ such that the sequence $g^C$ restricted to $I$ is a coordinate permutation of $g$. Split $\NN \setminus I$ into two sets, $J$, $J'$, such that $g^C$ is the identity on all coordinates from $J'$, and $g^C$ restricted to $J$ is a coordinate permutation of $g^C$.

%Now let $\sigma$ be a permutation of $\NN$ such that $\sigma(I) \subseteq J'$ and $(g^C)(n)=(g^C(\sigma(n))^{-1}$ for $n \notin I$. Then $(g^C(g^C)_\sigma)$ is such that $(g^C+(g^C)_\sigma)(n)=g(n)$ for $n \in I$, and is the identity everywhere else. Thus, a coordinate permutation of $(g^C(g^C)_\sigma)$ equals $g$. This shows the claim.

Put
\[ B_m= \{ C \in [\bigcup_n G_n]^\omega: g^C \in A_m \}. \]
%By the claim and the fact that $S_\infty$ is has strong uncountable cofinality, so it is contained in almost all $A_n$, we have that if range$(y) \in B_n$, then $y \in A^4_n$.

Clearly, $\bigcup_m B_m=[\bigcup_n G_n]^\omega$. But this means that there exists $m$ such that $B_m=B$. Otherwise, there is some $C_m \notin B_m$ for every $m$. Since families $B_m$ are closed under taking subsets, we have that $\bigcup_m C_m \notin B_n$ for every $n$, and $\bigcup_m B_m \neq B$, which is a contradiction. By the claim, $G=A^k_m$ for some $k$.
\end{proof}

%Clearly, every $x_a$, $a \in B$, is in some $G_n$, so $\bigcup_n B_n=B$. But this means that there exists $n_0$ such that $B_{n_0}=B$. Otherwise, there is $a_n \notin B_n$ for every $n$. Since $B_n$ are closed under taking subsets, we have that $\bigcup_n a_n \notin B_n$ for every $n$, and $\bigcup_n B_n \neq B$. It follows that $A^4_{n_0}=G$.

\begin{lemma}
\label{leMain2}
%\begin{enumerate}
%\item $S_\infty^\NN$ has ample generics and uncountable cofinality;
%\item let $G$ be an abstract group. Then $G \, Wr S_\infty$ has strong uncountable cofinality.
Let $N \in \{0,1, \ldots, \NN \}$, and let $G_n$, $n \in N$, be permutation groups. Then $G=\otimes_{n=0}^N (G_n Wr \, \SI)$ has uncountable strong cofinality.
%\end{enumerate}
\end{lemma}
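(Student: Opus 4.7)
Write $G = B \cdot T$ where $B = \otimes_n (G_n)^\NN$ is the base and $T = \otimes_n \SI$ is the top of the wreath products. Given a chain $A_0 \subseteq A_1 \subseteq \ldots$ with $G = \bigcup_m A_m$, I assume without loss of generality that each $A_m$ is symmetric and contains $e$. The plan is to bound $B$ and $T$ separately by fixed powers of $A_m$ and combine them using the factorization $G = B T$.

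For the base $B$, I apply Lemma \ref{leMain}. The crucial observation is that conjugation by $\tau = (\sigma_n)_n \in T$ sends $g = (g_n)_n \in B$ to $(\sigma_n g_n \sigma_n^{-1})_n$, realizing arbitrary coordinate permutations in each factor $(G_n)^\NN$ simultaneously. I therefore define
\[ A'_m = \{ \tau a \tau^{-1} : \tau \in T,\ a \in A_m \cap B \} \subseteq B, \]
which is an increasing chain covering $B$ (take $\tau = e$) and is closed under $T$-conjugation by construction. Hence the coordinate permutation hypothesis of Lemma \ref{leMain} holds with $l = 0$, and Lemma \ref{leMain} yields $B \subseteq (A'_{m_B})^{k_B}$ for some $m_B, k_B$.

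For the top $T$, Bergman's theorem for $\SI$ together with the fact that uncountable strong cofinality is preserved under finite direct products gives $T \subseteq A_{m_T}^{k_T}$ when $N$ is finite. Each element of $A'_{m_B}$ then factors as $\tau a \tau^{-1}$ with $\tau \in T \subseteq A_{m_T}^{k_T}$ and $a \in A_{m_B}$, so it lies in $A_m^{2k_T + 1}$ for $m \geq \max(m_B, m_T)$. Therefore $B \subseteq A_m^{k_B(2k_T + 1)}$, and combined with $T \subseteq A_m^{k_T}$ via $G = B T$ this produces $G = A_m^{k_B(2k_T + 1) + k_T}$, as required.

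The main obstacle will be the case $N = \NN$, where $T = \prod_n \SI$ is a countably infinite direct product and uncountable strong cofinality does not automatically pass through infinite products. I expect this to be handled either by invoking the corresponding infinite-product statement for $\SI$ as a known fact, or by a second application of Lemma \ref{leMain} to $T$ viewed as the single-factor product $(\SI)^\NN$, after replacing $A_m \cap T$ by an auxiliary chain closed under permutation of the outer $\NN$-index; translating that bound back to the original chain is the delicate point.
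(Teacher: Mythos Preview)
Your decomposition $G = BT$ and your use of Lemma \ref{leMain} for the base are exactly the paper's approach; your auxiliary chain $A'_m$ is a clean way to verify the coordinate-permutation hypothesis of Lemma \ref{leMain} and then translate the resulting bound back to the original chain once a bound on $T$ is in hand. For finite $N$ your argument is complete.

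For $N=\NN$, your option (1) is precisely what the paper does: it invokes Lemma~3.5 of Droste--G\"obel \cite{DrGo}, which asserts directly that $(\SI)^N$ has uncountable strong cofinality for every $N$, including $N=\NN$. With this in hand your argument goes through unchanged, so the only missing ingredient in your write-up is this citation.

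Your option (2), however, cannot be made to work. Applying Lemma \ref{leMain} to $T=(\SI)^\NN$ as a single-factor product requires that permutations of the \emph{outer} $\NN$-index be realizable at bounded cost in the original chain $(A_m)$. But nothing in $G$ permutes the direct factors $G_n\,Wr\,\SI$ among themselves---they sit as distinct coordinates of a direct product, and the $G_n$ are arbitrary and in general non-isomorphic---so no conjugation (or any other operation internal to $G$) effects such a permutation. An auxiliary chain artificially closed under outer-index permutations would indeed satisfy the hypothesis of Lemma \ref{leMain}, but the ``delicate point'' you flag, namely transporting the conclusion back to the $A_m$, has no mechanism available: there is no analogue of your $\tau a\tau^{-1}$ factorization. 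So the infinite-product case genuinely rests on the Droste--G\"obel result.
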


\begin{proof}
%We prove (1). It is known that $S_\infty$ has ample generics (see \cite{Hod}), and if $G$ has ample generics, then so does $G^\NN$. To see this, observe that if $U$ is comeager in $X$ then $U^\NN$ is comeager in $X^\NN$, and that if $U$ is the orbit of $g$ in $G$ under the action of $G$, then $U^\NN$ is an orbit of $(g,g, \ldots)$ in $G^\NN$ under the action of $G^\NN$. Therefore, if $(g_1,\ldots, g_n)$ is an element of $G^n$ with comeager orbit, then $((g_1,g_1, \ldots),(g_2, g_2, \ldots),\ldots, (g_n,g_n, \ldots))$ has a comeager orbit in $(G^\NN)^n$ 

%It follows (see Theorem 6.12 from \cite{KeRo}) that $S_\infty^\NN$ is not a union of a countable chain of non-open subgroups. Suppose that $S_\infty^\NN=\bigcup_n G_n$, where $G_n$ are open, $G_n \leq G_{n+1}$. But then there exists $n_0$ such that $x\times S^\NN \leq G_{n_0}$, where $x=(e, \ldots, e)$ has length $n$. But $S_\infty$ and each $S_\infty^n$, $n \in \NN$, have uncountable cofinality (see \cite{DiNeTh}), so there exists $n_1 \geq n_0$ such that $S_\infty^n \leq G_{n_1}$. Therefore $S_\infty=G_{n_1}$.

%We prove (2). Let $T\setminus \{ r \}=\{t_n\}_{n \in N}$. By Lemma \ref{xxx}, we can write $G=(\otimes_n G_{t_n}) Wr \, S_M$. The group $G^{base}$ has finite index in $G$, so it suffices to show that $G^{base}=(\otimes_n G_{t_n})^M$ has uncountable cofinality. And, since $M$ is finite, this is equivalent to proving that
%\[ \otimes_n G_{t_n}= \otimes_n (H_{t_n} Wr \, S_\infty) \]
%has uncountable cofinality.
Every element of $G=\otimes_{n=0}^N (G_n Wr \, \SI)$ is of the form
\[ (g_0s_0,g_1s_1,\ldots),\]
where $g_n \in G_n^\NN$, $s_n \in \SI$ , $n \in N$, so we can write it as
\[ (g_0,g_1, \ldots)(s_0,s_1, \ldots),\]
where $(g_0,g_1,\ldots) \in \otimes_{n=0}^N G_n^\NN$, $(s_0,s_1,\ldots) \in (\SI)^N$.

Suppose that $G=\bigcup_m  A_m$, where $A_0 \subseteq  A_1 \subseteq \ldots $, and put $H=\otimes_{n=0}^N G_n^\NN$, $B_m=H \cap A_m$. By Lemma 3.5 from \cite{DrGo}, the group $(\SI)^N$ has uncountable strong cofinality, that is, there exist $m,k$ such that $(\SI)^N \subseteq A_m^k$. Obviously, without loss of generality we can assume that $m=0$. Observe that the natural action of $(\SI)^N$ on $H$ by conjugation gives rise to all possible permutations of coordinates of elements of $H$. Therefore, Lemma \ref{leMain} implies that there exist $m,k$ such that $H \subseteq B_m^k$, and $G$ has uncountable strong cofinality.
\end{proof}

%Suppose $T$ is a  tree, $G_t$, $t \in T$, are symmetric groups. For $t \in T$ let $S^T_t$ be a tree defined by
%\[ S^T_t=T_t \setminus \bigcup \{T_{t'}:t' \geq t {\rm \ and \ } G_{t'} \ {\rm is \ infinite} \}. \]

%Let $(\bar{T},p,\{P_t\}_{t \in T})$ be the tree structure for $G=Wr_{t \in T} \, G_t$. For a finite $X \subset T$, let ${\rm ACL}(X)$ be the the set of those elements of $T$ that are contained in a finite orbit in $T$ under the action of the pointwise stabilizer of $X$ in $G$.

%We have:

%\begin{lemma}
%\label{leFinTree1}
%Suppose $T$ is a  tree. The following conditions are equivalent.
%\begin{enumerate}
%\item the tree $S^T_r$ is finite;
%\item the set ${\rm ACL}_T(\emptyset)$ is finite.
%\end{enumerate}
%\end{lemma}

\begin{theorem}
\label{ThM1}
Let $T$ be a  tree. If $\ACL_T(\emptyset)$ is finite, then $\Aut(T)$ has uncountable strong cofinality. Otherwise, it has countable cofinality and does not have the small index property.
\end{theorem}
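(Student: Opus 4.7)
The plan is to reformulate the hypothesis on $\ACL_T(\emptyset)$ in terms of the canonical subtree $\overline{T}$, and then split the argument accordingly. For an element of $T$ identified with $(n_t)_{t \in I}$ in $Y$, the $\Aut(T)$-orbit has size $\prod_{t \in I} |N_t|$; since initial segments $I$ of $\overline{T}$ are finite chains, this product is finite precisely when every $G_s = \Sym(N_s)$ for $s \in I$ is finite. Hence $\ACL_T(\emptyset)$ is finite if and only if the subtree
\[ S = \{ t \in \overline{T} : G_s \text{ is finite for all } s \leq t \text{ in } \overline{T} \} \]
is itself finite.

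Suppose first $S$ is infinite. By K\"onig's lemma applied to $S$, either $S$ contains an infinite branch, or some node $t_n$ in $S$ has infinitely many immediate successors $s_0, s_1, \ldots$ in $S$. In the first case, the branch is a subtree of $\overline{T}$ of shape (1) in Lemma \ref{le4}; in the second, the unique path $r = t_0 < \cdots < t_n$ together with $\{s_0, s_1, \ldots\}$ is a subtree of $\overline{T}$ of shape (2). All the nodes involved lie in $S$ and so carry finite $G_t$, and Lemma \ref{le4} directly yields countable cofinality of $\Aut(T)$ together with failure of the small index property.

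Assume now that $S$ is finite; the goal is to show $G := Wr_{t \in \overline{T}} \, G_t$ has uncountable strong cofinality. Iterating Lemma \ref{le2}, the subtree of $\overline{T}$ above each leaf $l$ of $S$ may be collapsed into a single node carrying the group $H_l := Wr_{t \in \overline{T}_l} \, G_t$, yielding an isomorphism $G \cong Wr_{t \in S} \, H_t$, where $H_t = G_t$ for non-leaf $t \in S$. When $l$ is also a leaf of $\overline{T}$, $H_l = G_l$ is finite. Otherwise the immediate successors $s_{l,i}$ of $l$ in $\overline{T}$ all lie outside $S$, so $G_{s_{l,i}} = \SI$, and a further application of Lemma \ref{le2} rewrites
\[ H_l \cong \bigl( \otimes_i (M_i \, Wr \, \SI) \bigr) \, Wr \, G_l, \]
where $M_i$ is the wreath product built from the part of $\overline{T}$ strictly above $s_{l,i}$. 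Lemma \ref{leMain2} then gives that $\otimes_i (M_i \, Wr \, \SI)$ has uncountable strong cofinality, and because $G_l$ is finite and uncountable strong cofinality passes to finite-index extensions (via the routine observation that $N \subseteq A_m^k$ forces $G = N \cdot F \subseteq A_m^{k+1}$ for any finite transversal $F \subseteq A_m$), the same holds for $H_l$. On the finite tree $S$ every interior group $H_t = G_t$ is finite while every leaf group $H_l$ has uncountable strong cofinality, so the inductive argument of Lemma \ref{le3}, whose every step reduces to a wreath product by a finite group, applies and delivers uncountable strong cofinality of $G$. The main bookkeeping hurdle is assembling the iterated wreath-product decomposition so that the $\SI$-factors land exactly in the positions demanded by Lemma \ref{leMain2}.
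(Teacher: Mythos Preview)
Your overall strategy matches the paper's: translate finiteness of $\ACL_T(\emptyset)$ into finiteness of the subtree $S\subseteq\overline{T}$ of nodes all of whose predecessors carry finite symmetric groups, handle the infinite case via K\"onig's lemma and Lemma~\ref{le4}, and in the finite case collapse the parts of $\overline{T}$ outside $S$ so that Lemma~\ref{leMain2} and Lemma~\ref{le3} can be applied. The infinite case and the finite-index extension argument are both fine.

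The gap is in the decomposition for finite $S$. You claim $G\cong Wr_{t\in S}\,H_t$ with $H_t=G_t$ for every non-leaf $t\in S$, but this is false whenever some non-leaf $s\in S$ has an immediate $\overline{T}$-successor outside $S$. Concretely: take $\overline{T}$ with root $r$ (so $G_r$ is trivial) having two successors $a,b$ with $G_a=\SI$ and $G_b=\Sym(2)$, and nothing above $a$ or $b$. Then $S=\{r,b\}$, $r$ is a non-leaf of $S$, yet your proposed $Wr_{t\in S}\,H_t$ with $H_r=G_r$ trivial and $H_b=G_b$ ignores $\overline{T}_a$ entirely, so it is not isomorphic to $G$. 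The point is that branches of $\overline{T}$ can exit $S$ at \emph{any} node of $S$, not just at its leaves. The paper handles this by attaching, to \emph{every} $s\in S$, a new child $s'$ carrying $G_{s'}=\bigotimes_n (H_n\,Wr\,\SI)$, where the $t_n$ range over the immediate $\overline{T}$-successors of $s$ lying outside $S$; then $G\cong Wr_{t\in S'}\,G_t$ with $S'=S\cup\{s':s\in S\}$ finite, each $G_{s'}$ has uncountable strong cofinality by Lemma~\ref{leMain2}, and Lemma~\ref{le3} finishes. Once you make this correction your argument goes through, and indeed coincides with the paper's.
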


\begin{proof}
Put $G=\Aut(T)=Wr_{t \in \overline{T}} \, G_t$, and define
\[ S=\overline{T} \setminus \bigcup \{\overline{T}_t: t \in \overline{T} {\rm \ and \ } G_t \ {\rm is \ infinite} \}. \]

It is straightforward to check that $S$ is finite if and only if $\ACL_T(\emptyset)$ is finite.

Suppose that $S$ is finite, and fix $s \in S$. Let $\{t_n\}$ be an enumeration of all immediate successors of $s$ in $\overline{T}$ such that $t_n \in \overline{T} \setminus S$, that is, $G_{t_n}=\SI$. Then, for every tree $\overline{T}_{t_n}$ with root $t_n$, we have
\[ \Aut(T_{t_n})=Wr_{t \in \overline{T}_{t_n}} \, G_t= H_n \, Wr \, \SI, \]
for some permutation group $H_n$, so, by Lemma \ref{le2},
\[ \Aut(\{s\} \cup \{\overline{T}_{t_n}\})=Wr_{t \in \{s\}\cup\{\overline{T}_{t_n}\}} \, G_t=(\otimes_n (H_n \, Wr \, \SI)) \, Wr \, G_s. \]

We add a new element $s'$ to $S$, which is an immediate successor of $s$, and put $G_{s'}=\otimes_n (H_n \, Wr \, \SI)$.

Then, for $S'=S \cup \{s': s \in S\}$, the group $Wr_{s \in S'} \, G_s$ is isomorphic to $G$. By Lemma \ref{leMain2}, each $G_{s'}$ has uncountable strong cofinality. Since each $G_s$, $s \in S$, is finite, and $S'$ is finite, by Lemma \ref{le3}, $Wr_{s \in S'} \, G_s$, and so $G$, has uncountable strong cofinality.

%Clearly, $G_{T_{t_n}}=Wr_{t \in T_{t_n}} \, G_t$ is of the form $H Wr \, \SI$, so, by Lemma \ref{leMain2}, the group $\otimes_n G_{T_{t_n}}$ has uncountable strong cofinality. Put $t_s=t_0$, and $G'_{t_s}=\otimes_n G_{T_{t_n}}$.

%Now, for $s \in S$ put $G'_s=G_s$, and let $S'=S \cup \{t_s : s \in S \}$. By Lemma \ref{le3}, the group $Wr_{s \in S'} G'_s$ has uncountable strong cofinality. By Lemma \ref{le2} it is isomorphic to $G$, so $G$ has uncountable strong cofinality.

If $S$ is infinite, then, by K\"{o}nig's lemma, $S$ contains an infinite branch, or some element of $S$ has infinitely many immediate successors; in any case, $S$ and thus $\overline{T}$ contains a subtree $S_0$ as in the statement of Lemma \ref{le4}. By Lemma \ref{le4}, $G$ has countable cofinality and does not have the small index property.

\end{proof}

\section{Ample generics.}

%\begin{lemma}
%\label{leFinTree2}
%Suppose $T$ is a  tree. The following conditions are equivalent.
%\begin{enumerate}
%\item the tree $S^T_t$ is finite for every $t \in T$;
%\item the set ${\rm ACL}(X)$ is finite for every finite $X \subset T$.
%\end{enumerate}
%\end{lemma}

\begin{theorem}
\label{ThM2}
Let $T$ be a  tree.
For $G={\rm Aut}(T)$ the following conditions are equivalent:
\begin{enumerate}
\item $\ACL_T(X)$ is finite for every finite $X \subset T$;
\item $G$ contains an open subgroup $H$ with ample generics;
\item $G$ has the small index property.
\end{enumerate}
\end{theorem}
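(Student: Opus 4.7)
The plan is to prove the equivalence by the cycle $(2) \Rightarrow (3) \Rightarrow (1) \Rightarrow (2)$. The implication $(2) \Rightarrow (3)$ is recorded in the introduction as a general consequence of results in \cite{KeRo}: any Polish group containing an open subgroup with ample generics has the small index property. The remaining two implications both proceed by replacing $G$ with the pointwise stabilizer of a well-chosen finite set, regarded as the automorphism group of the expansion of $M = (T, p, \{P_t\}_{t \in \overline T})$ by constants naming that set; this expansion remains ultrahomogeneous and locally finite, since every isomorphism of its finite substructures must fix the new constants pointwise and then extends to a full automorphism by Proposition \ref{prWr}.

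For $(3) \Rightarrow (1)$ I would argue by contraposition. Assume some finite $X \subseteq T$ has $\ACL_T(X)$ infinite and set $H = G_{\langle X \rangle}$, which is an open basic subgroup of $G$. The small index property is equivalent for $G$ and $H$: if $K \leq G$ has index less than $2^{\aleph_0}$, then $K \cap H$ has index less than $2^{\aleph_0}$ in $H$, so assuming $H$ has SIP forces $K \cap H$, and hence $K$, to be open; conversely SIP for $G$ plainly restricts to $H$. Hence it is enough to show $H$ lacks SIP. Writing $H$ as a generalized wreath product indexed by the tree associated to the expansion of $M$ by constants for $X$, the analogue for $H$ of the set $S$ appearing in the proof of Theorem \ref{ThM1} corresponds exactly to $\ACL_T(X)$ modulo the finite contribution of $X$ itself, so is infinite. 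K\"{o}nig's lemma then extracts a subtree of the shape required by Lemma \ref{le4}, and that lemma combined with Lemma \ref{le3.5} supplies a continuous surjection $H \to (\mathbbm{Z}_2)^\NN$, showing that $H$, and therefore $G$, lacks SIP.

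For $(1) \Rightarrow (2)$ I would set $X_0 = \ACL_T(\emptyset)$, finite by (1), and let $H = G_{\langle X_0 \rangle}$, an open subgroup of $G$. Writing $M_0$ for the expansion of $M$ by constants naming $X_0$, we have $H = \Aut(M_0)$ with $M_0$ ultrahomogeneous and locally finite, so by Theorem \ref{thAmple} it suffices to verify JEP and WAP for $({\rm Age}(M_0))_p^n$ for every $n$. JEP is the easier half: every element of $\mathcal{K}_p^n$ has underlying set containing $X_0$, since constants lie in every substructure, so a free amalgamation of two such underlying sets over $X_0$, realized inside $T$ by ultrahomogeneity, joins the structures, and the accompanying partial isomorphisms can be combined coordinatewise because they automatically agree on $X_0$.

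The heart of the argument, and the main obstacle, is WAP. Given $\mathcal{S} = \langle A, \phi_1, \ldots, \phi_n \rangle \in \mathcal{K}_p^n$, I would take $A^* = \ACL_T(A)$, finite by (1) (and a subtree of $T$, since algebraic closures are closed under the predecessor function), pick for each $i$ an element $g_i \in H$ extending $\phi_i$ via ultrahomogeneity of $M_0$, and define $\phi_i^* = g_i \upharpoonright \ACL_T({\rm dom}(\phi_i))$, which then maps $\ACL_T({\rm dom}(\phi_i)) \subseteq A^*$ onto $\ACL_T({\rm ran}(\phi_i)) \subseteq A^*$. The WAP-witness is $\mathcal{T} = \langle A^*, \phi_1^*, \ldots, \phi_n^* \rangle$. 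For any two embeddings $\mathcal{T} \to \mathcal{F}$ and $\mathcal{T} \to \mathcal{G}$, I would amalgamate the underlying trees of $\mathcal{F}$ and $\mathcal{G}$ freely over the image of $A^*$, realized inside $T$, and combine the partial isomorphisms on the two halves. The nontrivial point is that this combination is a single partial isomorphism of the amalgam: here the finiteness of $\ACL_T(A)$ ensures that every element of $\mathcal{F}$ or $\mathcal{G}$ lying outside the image of $A^*$ is non-algebraic over $A$, so in the free amalgam the domains and ranges of the two sides meet only on the common closed core $A^*$, where they already agree by construction, and no conflict arises.
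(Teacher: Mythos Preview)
Your proof follows essentially the same route as the paper: the cycle $(2)\Rightarrow(3)\Rightarrow(1)\Rightarrow(2)$, with $(2)\Rightarrow(3)$ from \cite{KeRo}, the contrapositive $\neg(1)\Rightarrow\neg(3)$ via the wreath-product description of $G_{\langle X\rangle}$ and Lemma~\ref{le4}, and $(1)\Rightarrow(2)$ by verifying WAP and JEP for the age of the expansion by constants for $X_0=\ACL_T(\emptyset)$ using Theorem~\ref{thAmple}. The only noteworthy difference is in the WAP witness: you enlarge each $\phi_i$ to $\phi_i^*$ defined on $\ACL_T(\mathrm{dom}\,\phi_i)$, whereas the paper keeps the $\phi_i$ unchanged on $B_i$ and instead asserts that one may assume $(\chi_i)_{\upharpoonright A'}=(\xi_i)_{\upharpoonright A'}$ without loss of generality before forming the free amalgam; these are two ways of handling the same compatibility issue, and your version makes that step slightly more explicit.
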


\begin{proof}
We show $(1) \Rightarrow (2)$. Suppose that $\ACL_T(X)$ is finite for every finite $X \subseteq T$, and let $X_0=\ACL_T(\emptyset)$. Observe that the stabilizer $G_{\left\langle X_0\right\rangle}$ of $X_0$ is open in $G$, and $G_{\left\langle X_0 \right\rangle}$ can be thought of as the automorphism group of an ultrahomogeneous, locally finite structure $T'$ obtained from $T$ by adding names to $T$ for every $x \in X_0$.

We show that for $\mathcal{K}={\rm Age}(T')$, the classes $\mathcal{K}_p^n$, $n \in \NN$, satisfy WAP and JEP. 

Fix $n \in \NN$, and for $\mathcal{S} \in \mathcal{K}_p^n$ of the form

\[ \mathcal{S}=\left\langle A, \phi_1:B_1 \rightarrow C_1, \ldots, \phi_n:B_n \rightarrow C_n \right\rangle \]
let $A'=\ACL_T(A)$, and let $\mathcal{T} \in \mathcal{K}_p^n$ be defined by

\[ \mathcal{T}=\left\langle A', \phi_1:B_1 \rightarrow C_1, \ldots, \phi_n:B_n \rightarrow C_n \right\rangle. \]

Then, by our assumption, for every successor $t$ of a leaf $a$ in $A$ with $P_{t'}(t)$ for some $t' \in \overline{T}$, there are infinitely many $t_n$, $n \in \NN$ such that $P_{t'}(t_n)$. Therefore, if
\[ \mathcal{F}=\left\langle H, \chi_1:M_1 \rightarrow N_1, \ldots, \chi_n:M_n \rightarrow N_n \right\rangle,\]
\[ \mathcal{G}= \left\langle P, \xi_1:Q_1 \rightarrow R_1, \ldots, \xi_n:Q_n \rightarrow R_n \right\rangle, \]
$\mathcal{F},\mathcal{G} \in \mathcal{K}_p^n$,
and $i:\mathcal{T} \rightarrow \mathcal{F}$, $j:\mathcal{T} \rightarrow \mathcal{G}$ are embeddings, we can assume without loss of generality that $H \cap P=A'$, and $(\chi_i)_{\upharpoonright A'}=(\xi_i)_{\upharpoonright A'}$, $i \leq n$. It is a little tedious but completely straightforward to check that in this case the structure $\mathcal{E} \in \mathcal{K}_n^p$ defined by
\[ \mathcal{E}=\left\langle H \cup P, \chi_1 \cup \xi_1, \ldots, \chi_n \cup \xi_n \right\rangle \]
along with natural embeddings $k$, $l$ of $\mathcal{F}$, $\mathcal{G}$ into $\mathcal{E}$ amalgamates $\mathcal{F}$ and $\mathcal{G}$ over $\mathcal{T}$.

To show JEP, observe that every embedding $f:A \rightarrow B$ between finite subsets of $T'$ fixes all elements in $X_0$, so we can repeat the above argument.

The implication $(2) \Rightarrow (3)$ follows from Theorem 6.9 from \cite{KeRo} saying that if $H \leq G$ has ample generics, then $H$ has the small index property, and an observation that, since $[H:G] \leq \aleph_0$, in this case $G$ also has the small index property.

Finally, we show $\neg (1) \Rightarrow \neg (3)$.
Let $X_0 \subset T$ be a finite set such that $\ACL_T(X_0)$ is infinite. It is not hard to find a tree $T'$, and symmetric groups $G_t$, $t \in T'$, such that $G_{\left\langle X_0 \right\rangle}=Wr_{t \in T'} \, G_t$. Then we can define $S \subset T'$ for $T'$ as in the proof of Theorem \ref{ThM1}, and observe that because $\ACL_T(X_0)$ is infinite, $S$ is also infinite. Therefore, there exists $S_0 \subset S$ as in Lemma \ref{le4} such that $G_s$ is a finite symmetric group for every $s \in S_0$. Lemma \ref{le4} implies that $G_{\left\langle X_0 \right\rangle}$, and so $G$, does not have the small index property.

%If there exists a finite $X_0 \subset T$ such that $\ACL_T(X_0)$ is infinite, then $\ACL_T(X)$ is infinite for every finite $X$ with $X_0 \subset X$. It is not hard to see that then $G_{\left\langle X \right\rangle}=Wr_{t \in T'} \, G_t$, where $T'$ contains $S_0$ as in Lemma \ref{le4} such that $G_s$ is a finite symmetric group for every $s \in S_0$. Therefore Lemma \ref{le4} implies that $G_{\left\langle X \right\rangle}$, and so $G$, does not have the small index property.
\end{proof}
 
By Theorems 6.24 and 6.25 from \cite{KeRo}, we get
\begin{corollary}
Let $T$ be a  tree, $G=\Aut(T)$. If $\ACL_T (X)$ is finite for every finite $X \subset T$, then
\begin{enumerate}
\item every homomorphism from $G$ into a separable topological group $H$ is continuous;
\item the standard product topology is the unique Polish topology on $G$.
\end{enumerate}
\end{corollary}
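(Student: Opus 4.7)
The plan is to deduce the corollary from Theorem \ref{ThM2} together with the two results from \cite{KeRo} cited in the statement. The assumption that $\ACL_T(X)$ is finite for every finite $X \subseteq T$ gives, via Theorem \ref{ThM2}, an open subgroup $K \leq G$ with ample generics. Both conclusions then reduce to standard facts about Polish groups that contain an open subgroup with ample generics; both such facts are proved in \cite{KeRo}.

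For part (1), I would apply Theorem 6.24 of \cite{KeRo}, which asserts that every homomorphism from a Polish group with ample generics into a separable topological group is continuous. Applied to $K$, this yields that for any homomorphism $\phi : G \to H$ into a separable topological group $H$, the restriction $\phi_{\upharpoonright K}$ is continuous. Since $K$ is open in $G$, continuity of $\phi$ at the identity is immediate from continuity of $\phi_{\upharpoonright K}$ at the identity, and by translation $\phi$ is continuous on all of $G$.

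For part (2), let $\tau'$ be any other Polish group topology on $G$, and let $\tau$ denote the standard product topology. By part (1) applied to the identity map $(G,\tau') \to (G,\tau)$, which is a homomorphism from a Polish group into a separable topological group, this map is continuous. Being a continuous bijective group homomorphism between Polish groups, it is a homeomorphism by the open mapping theorem for Polish groups, so $\tau = \tau'$. This is the content of Theorem 6.25 of \cite{KeRo}.

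The main observation is that passing from a property of an open subgroup $K$ to the corresponding property of $G$ is routine, since $[G:K]$ is countable and every translate of $K$ is open; the substantive content has been absorbed into Theorem \ref{ThM2} and into \cite{KeRo}. For this reason I do not expect any genuine obstacle: the corollary is essentially a packaging of earlier material.
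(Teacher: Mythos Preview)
Your approach is essentially the same as the paper's: invoke Theorem \ref{ThM2} to obtain an open subgroup with ample generics, then cite Theorems 6.24 and 6.25 of \cite{KeRo}. The paper gives no further detail, so your unpacking of how the open-subgroup case reduces to the ample-generics case is a reasonable elaboration.

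There is, however, a slip in your argument for part (2). Part (1) asserts automatic continuity for homomorphisms \emph{from} $(G,\tau)$, where $\tau$ is the standard product topology; it says nothing about homomorphisms from $(G,\tau')$. So you cannot apply part (1) to the identity map $(G,\tau') \to (G,\tau)$ as you wrote. The correct move is to apply part (1) to the identity map $(G,\tau) \to (G,\tau')$: since $(G,\tau')$ is Polish and hence separable, part (1) gives continuity of this map, and then the open mapping theorem for Polish groups yields that it is a homeomorphism, so $\tau=\tau'$. With this direction corrected, your argument goes through.
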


%\begin{proof}
%We use Theorems 6.24 and 6.25 from \cite{KeRo}, saying that a Polish group with ample generics satisfies the statement of the corollary. In our case, the last two statements follow from the fact that the group topology of a topological group $G$ is determined by the topology of any of its open subgroups. The first one is a standard observation that $[H \cap K: K] \leq [H:G]$ for any subgroup $K$ of $G$. Therefore, Theorem 6.9 from \cite{KeRo} implies that any subgroup $K \leq G$ with $[K:G]$ smaller than continuum is open in an open subgroup of $G$, and thus open in $G$.
%\end{proof}

\section{Rigidity of trees}

By a (non-rooted) tree, we mean a connected graph with no cycles.

In Theorem 4.4 from \cite{BaLu}, Bass and Lubotzky proved a rigidity theorem to the extent that a reach enough group of automorphisms of a locally finite tree $T$ completely determines $T$. That is, if $G$ is a group of automorphisms of locally finite trees $T_1,T_2$, satisfying some additional assumptions, we will not dwell into, then $T_1$ is isomorphic to $T_2$. %This, in particular, gives the following rigidity result for locally finite trees. 

%\begin{theorem}
%Suppose that $T$ is locally finite tree such that $[G_x:G_{x,y}] \geq 3$ for every $x \in T$ and every child $y$ of $x$, and $G={\rm Aut}(T)$ acts without inversions on $T$. Then $G$ completely determines $T$.
%\end{theorem}

As the authors pointed out, the condition of being locally finite is rather restrictive. This limitation was removed, applying two different approaches, by Psaltis (\cite{Ps}) and Forester (\cite{Fo}), however not without some trade-ins. Psaltis managed to get rid of the assumption of local finitiness of $T$, but had to restrict himself to full automorphism groups. His main result (Theorem 6.9a,b,c) is

\begin{theorem}[Psaltis]
Let $T$ be a tree with countable number of edges incident at each vertex, and $i_G(e) \geq 3$ for each edge $e$. Then ${\rm Aut}(T)$ completely determines $T$.
\end{theorem}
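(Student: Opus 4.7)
The plan is to reconstruct the tree $T$ directly and intrinsically from the Polish group $G=\Aut(T)$, so that any topological group isomorphism $G \cong G'$ is forced to come from a tree isomorphism $T \cong T'$. The strategy has three ingredients: identify the vertex set of $T$ with a distinguished family of open subgroups of $G$; recover the adjacency relation from how these subgroups intersect; and use the hypothesis $i_G(e)\geq 3$ to exclude configurations where the above would be ambiguous.

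For the first step, the natural candidates are the stabilizers $G_v$, which are automatically open. The task is to pick them out of the lattice of all open subgroups of $G$. In the locally finite Bass--Lubotzky setting, vertex stabilizers are precisely the maximal compact subgroups, but without local finiteness this characterization is unavailable. A workable replacement, I expect, is to use the induced action of $G$ on the space of ends of $T$---which can be reconstructed from $G$ intrinsically, as the boundary of a canonical coset geometry---and to recover each vertex as a distinguished branch point, using that $i_G(e) \geq 3$ produces at least three $G_v$-inequivalent families of ends through each neighbor of $v$. Once the vertex stabilizers have been identified, adjacency of $u$ and $v$ is read off from the fact that $G_u \cap G_v$ is a maximal proper open subgroup of each of $G_u$ and $G_v$, of index at least $3$ in each; the tree then reappears as the incidence graph of this reconstructed family.

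The main obstacle is the first step: giving a purely topological-group-theoretic specification of vertex stabilizers when the tree has vertices of infinite degree. The hypothesis $i_G(e)\geq 3$ plays a double role here. It rules out edges along which $G$ admits an inversion, which would make the two endpoints algebraically indistinguishable; and it provides enough $G_v$-orbits on the neighbors of $v$ to force a rigid local pattern, so that the internal characterization of a single $G_v$ propagates coherently across $T$. Psaltis's restriction to the full automorphism group, rather than to an arbitrary sufficiently rich subgroup as in Bass--Lubotzky, is what makes this identification viable: the abundance of automorphisms in $\Aut(T)$ lets vertex stabilizers be pinned down by properties of their coset actions that would simply fail for smaller groups. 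Once the tree has been recovered as a canonical invariant of $G$, the rigidity statement becomes automatic: any isomorphism $\Aut(T_1) \to \Aut(T_2)$ must transport the tree reconstructed on one side to the tree reconstructed on the other.
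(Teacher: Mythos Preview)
The paper does not prove this statement. It is quoted from Psaltis's article \cite{Ps} (Theorem 6.9a,b,c there) purely as background for the discussion of rigidity in Section~5, alongside the Bass--Lubotzky and Forester theorems; no argument for it is given or even sketched in the present paper. So there is nothing here to compare your proposal against.

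As for the proposal itself: the outline---recover the vertex set as a distinguished family of open subgroups, read adjacency off from their intersections, and use $i_G(e)\geq 3$ to kill ambiguities---is a plausible shape for a rigidity argument of this type, and is in the spirit of what Bass--Lubotzky do in the locally finite case. But it is a sketch, not a proof: the crucial step, giving an intrinsic topological-group characterization of the $G_v$ when vertices may have infinite degree, is only gestured at (``I expect\ldots''), and the appeal to ends and ``canonical coset geometry'' would need substantial work to make precise. If you actually want to prove this, you should consult \cite{Ps} directly; the present paper is not the right source.
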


Here $i_G(e)$ denotes $[G_t:G_e]$, where $G_e$ is the stabilizer of edge $e$ in $G$, and $G_t$ is the stabilizer of vertex $t$ such that $e=(t,s)$ for some $s \in T$.

On the other hand, Forester's results concern also subgroups of the full automorphism groups, but with more additional assumptions present. In particular, they involve Serre's property (FA). Recall that $G$ has property (FA) if every action of $G$ on a tree without inversions has a fixed point.

\begin{theorem}[Forester]
Let $G$ be a group acting on trees $T_1, T_2$ without inversions. Let $T_1$ be a strongly slide-free, and
$T_2$ a proper tree, both cocompact. Suppose that all vertex stabilizers are
unsplittable. If either
\begin{enumerate}[a)]
\item one of the trees has (FA) vertex stabilizers, or
\item one of the trees is locally finite,
\end{enumerate}
then there is a unique isomorphism of $G$-trees $T_1$ and $T_2$.
\end{theorem}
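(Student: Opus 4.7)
The plan is to work within the framework of Bass--Serre theory and the theory of deformation spaces of $G$-trees. Two simplicial $G$-trees lie in the same deformation space precisely when they have the same elliptic subgroups, and the main substance of a rigidity theorem of this shape is to show that the hypotheses force $T_1$ and $T_2$ into a common deformation space and then that strong slide-freeness identifies a unique terminal object there.

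First, I would verify that the vertex stabilizers of $T_1$ all act with a global fixed point on $T_2$ (and vice versa, under hypothesis (a)). This is exactly where unsplittability does the work: an unsplittable subgroup $H$ of $G$ cannot act on a tree without fixing either a vertex or stabilizing an end, and cocompactness plus properness of $T_2$ rules out fixed ends, so $H$ fixes a vertex. Under hypothesis (a), the symmetric statement for $T_2$ is immediate from property (FA); under hypothesis (b), local finiteness is used instead to argue that an action with all orbits bounded on a locally finite tree has a fixed vertex, so the same conclusion holds. This establishes that $T_1$ and $T_2$ have the same families of elliptic subgroups and hence belong to the same deformation space.

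Next, I would invoke the central property of strong slide-freeness, which is designed so that no nontrivial collapse or expansion move within the deformation space is available at $T_1$: every edge stabilizer of $T_1$ is maximal among the admissible edge stabilizers incident to its vertex stabilizer. This means $T_1$ is a \emph{reduced} and in fact terminal representative of its deformation space. The theorem of Forester on deformation spaces (the uniqueness-of-reduction result) then forces any other cocompact $G$-tree in the same deformation space, in particular $T_2$, to admit an equivariant map to $T_1$. Running the argument with the roles of $T_1$ and $T_2$ swapped, and using properness of $T_2$ so that no folding can shorten edges, yields an equivariant isomorphism.

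Uniqueness of the isomorphism then follows from minimality and the fact that the centralizer of the $G$-action on $T_1$ is trivial (a consequence of cocompactness together with unsplittability of vertex stabilizers, which prevents any nontrivial $G$-equivariant tree automorphism from being free of fixed vertices). The main obstacle I expect is the first step: carefully showing that unsplittability combined with the asymmetric hypotheses (a) or (b) is strong enough to rule out elliptic subgroups of $T_1$ acting on $T_2$ with no fixed vertex, since a priori one only gets bounded orbits, and converting bounded orbits to fixed vertices in a tree that is neither locally finite nor carries a (FA) hypothesis is precisely where the original proof has to do real work.
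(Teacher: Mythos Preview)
The paper does not prove this theorem at all: it is quoted verbatim as a result of Forester (reference \cite{Fo}) in the final section, purely as background for the discussion of rigidity, and no argument is given or even sketched. There is therefore nothing in this paper to compare your proposal against.

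For what it is worth, your outline is broadly in the spirit of Forester's actual approach in \cite{Fo}---deformation spaces of $G$-trees, identifying elliptic subgroups, and exploiting strong slide-freeness as a terminality condition---so you are not off track substantively. But since the present paper supplies no proof, the comparison you were asked to make is vacuous here; if you want to check your sketch, you need to consult \cite{Fo} directly rather than this paper.
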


We will not define all the technical notions involved in the statement of this theorem. Suffices to say that if $i_G(e) \geq 3$ for every edge $e$ in $T$, then $T$ is strongly slide-free and proper, and the stabilizers of vertices in ${\rm Aut}(T)$ are known to be unsplittable. Cocompactness means that there are only finitely many orbits of the action of the stabilizer of $t$ on the set of all children of $t$, $t \in T$, so it puts extra restrictions on $T_1, T_2$, compared to Psaltis' theorem. However, one can ask whether the assumption on sharing property (FA) by all stabilizers can be removed if $G$ is the full automorphism group (or, when it is satisfied.) Because uncountable strong cofinality clearly implies property (FA), Theorem \ref{Th1} shows that this happens only in very special situations, when  $i(e)=\aleph_0$ for `most' edges $e$ in $T$. It turns out that in this case stabilizers of vertices of $T$ contain an open subgroup with ample generics.

\begin{theorem}
\label{coNonRooted}
Let $T$ be a non-rooted tree. The following conditions are equivalent:
\begin{enumerate}

\item The stabilizer in ${\rm Aut}(T)$ of every vertex $t \in T$ has property (FA);
\item $\ACL_T(X)$ is finite for every finite non-empty $X \subset T$;
\item the stabilizer of every vertex $t \in T$ contains an open subgroup with ample generics.
\end{enumerate}
\end{theorem}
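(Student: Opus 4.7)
My plan is to reduce everything to the rooted case. For any vertex $t\in T$, designating $t$ as the root turns $T$ into a rooted tree $(T,t)$ whose full automorphism group is precisely the stabilizer $G_t := \Aut(T)_{\langle t\rangle}$, so Theorems \ref{ThM1} and \ref{ThM2} apply directly to $G_t$. The bookkeeping identity underlying everything is that for any finite $Y\subseteq T$, the pointwise $G_t$-stabilizer of $Y$ equals the pointwise $\Aut(T)$-stabilizer of $Y\cup\{t\}$, and consequently $\ACL_{(T,t)}(Y)=\ACL_T(Y\cup\{t\})$.

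For $(2)\Leftrightarrow(3)$, I would note that every finite non-empty $X\subset T$ has the form $Y\cup\{t\}$ with $t\in X$ and $Y=X\setminus\{t\}$, so $(2)$ is equivalent to requiring, for every $t\in T$ and every finite $Y\subseteq T$, that $\ACL_{(T,t)}(Y)$ be finite. Applying Theorem \ref{ThM2} to the rooted tree $(T,t)$ converts this, for each fixed $t$, into the statement that $G_t$ contains an open subgroup with ample generics; quantifying over $t\in T$ then gives $(3)$.

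For $(3)\Rightarrow(1)$, if $G_t$ has an open subgroup with ample generics then Theorem \ref{ThM2} applied to $(T,t)$ yields in particular that $\ACL_{(T,t)}(\emptyset)$ is finite. Theorem \ref{ThM1} then forces $G_t$ to have uncountable strong cofinality, which as remarked in the Introduction implies Serre's property (FA); this is $(1)$.

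The remaining implication $(1)\Rightarrow(2)$ I would establish contrapositively, and this is where I expect the real work. Assume $\ACL_T(X_0)$ is infinite for some finite non-empty $X_0$ and fix $t_0\in X_0$; then $\ACL_{(T,t_0)}(X_0)$ is infinite, so by the analysis in the proof of $\neg(1)\Rightarrow\neg(3)$ of Theorem \ref{ThM2}, the wreath-product description of $\Aut(T)_{\langle X_0\rangle}$ exposes an infinite subtree $S_0$ of the Lemma \ref{le4} type. The hard part will be to leverage this combinatorial structure into an explicit action of $G_{t_0}$ on a simplicial tree without a global fixed vertex, thereby showing that $G_{t_0}$ fails property (FA). The homomorphism onto $(\mathbbm{Z}_2)^\NN$ provided by Lemma \ref{le4} is not enough on its own, since $(\mathbbm{Z}_2)^\NN$ is compact and hence automatically has (FA); the natural remedy is to consider instead the Bass-Serre tree coming from the iterated wreath decomposition of $\Aut(T)_{\langle X_0\rangle}$ along the infinite branch or infinite star $S_0$, on which $G_{t_0}$ should have an unbounded orbit.
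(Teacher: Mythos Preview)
Your treatment of $(2)\Leftrightarrow(3)$ and $(3)\Rightarrow(1)$ is fine and matches the paper's use of Theorems \ref{ThM1} and \ref{ThM2}.

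The genuine gap is in $(1)\Rightarrow(2)$. First, your claim that $(\mathbbm{Z}_2)^\NN$ ``is compact and hence automatically has (FA)'' is false: by Lemma \ref{le3.5} this group has countable cofinality, and any group $G=\bigcup_n G_n$ with $G_0<G_1<\ldots$ acts without inversion and without fixed point on the tree with vertex set $\bigsqcup_n G/G_n$ and edges $gG_n\,\text{---}\,gG_{n+1}$. This is exactly Serre's Theorem 15 (and the remarks following it), which the paper invokes: property (FA) forbids countable cofinality. So no Bass--Serre construction beyond this standard one is needed.

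Second, and more seriously, even with this correction your argument only shows that the open subgroup $G_{\left\langle X_0\right\rangle}$ fails (FA); it does not show that $G_{t_0}$ itself fails (FA). Failure of (FA) does not pass from a subgroup to an overgroup, and you give no mechanism for extending the action of $G_{\left\langle X_0\right\rangle}$ to one of $G_{t_0}$. The paper handles this by inserting a purely combinatorial step, Lemma \ref{lePo}: if $\ACL_T(\{t\})$ is finite for every $t$, then $\ACL_T(X)$ is finite for every finite non-empty $X$. Contrapositively, your infinite $\ACL_T(X_0)$ already forces $\ACL_T(\{t\})$ to be infinite for some single vertex $t$; for that $t$, Theorem \ref{ThM1} gives $G_{\left\langle t\right\rangle}$ countable cofinality, hence $G_{\left\langle t\right\rangle}$ fails (FA). This reduction to singletons is the missing ingredient in your plan.
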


\begin{lemma}
\label{lePo}
Let $T$ be a non-rooted tree.  if $\ACL_T(\{t \})$ is finite for every $t \in T$ then $\ACL_T(X)$ is finite for every non-empty finite $X \subset T$.
\end{lemma}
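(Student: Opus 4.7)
The plan is to show
\[ \ACL_T(X) \subseteq \bigcup_{v \in C} \ACL_T(\{v\}), \]
where $C$ is the convex hull of $X$ in $T$, i.e.\ the smallest subtree containing $X$. Since $X$ is finite and non-empty, $C$ is a finite subtree, and by hypothesis each $\ACL_T(\{v\})$ is finite, so the right-hand side is a finite union of finite sets.

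To begin I would note that a tree automorphism fixing two vertices must fix the unique geodesic between them pointwise, so $G_{\langle X\rangle}$ fixes $C$ pointwise. Now take $y \in \ACL_T(X)$. If $y \in C$, then $y \in \ACL_T(\{y\})$ trivially, so suppose $y \notin C$, and let $v \in C$ be the vertex of $C$ closest to $y$. The neighbor $w$ of $v$ on the path from $v$ to $y$ cannot belong to $C$, otherwise $w$ would be a vertex of $C$ strictly closer to $y$ than $v$. Hence $y$ lies in a \emph{free branch} at $v$, meaning a connected component of $T \setminus \{v\}$ disjoint from $C$.

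The main computation is a wreath-product analysis at $v$. View $T$ as rooted at $v$ and split the branches at $v$ into the free branches and the at most $|C|$ many $C$-branches (those whose root neighbor lies in $C$). Because $G_{\langle X\rangle}$ fixes $C$ pointwise but places no further constraint on the free branches, and because actions on distinct branches at $v$ are independent by the wreath structure of Lemma \ref{le2}, the projection of $G_{\langle X\rangle}$ to the permutation group of the union of free branches at $v$ equals the full product $\prod_A \bigl(\Aut(A) \, Wr \, S_{n_A}\bigr)$, where $A$ ranges over the isomorphism types of free branches and $n_A$ is the number of those of type $A$. Let $A$ now denote the type of the free branch containing $y$ and $k$ the size of the orbit of $y$ inside that branch under $\Aut(A)$. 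Then the $G_{\langle X\rangle}$-orbit of $y$ has size $n_A k$, so by assumption $n_A, k < \infty$. The same wreath description applied to the larger group $G_{\langle \{v\}\rangle} = \Aut(T_v)$ allows, in addition, the type-$A$ $C$-branches at $v$ to enter the picture, giving a total of $m_A \leq n_A + |C|$ branches of type $A$ at $v$ and hence an orbit of size $m_A k < \infty$. Therefore $y \in \ACL_T(\{v\})$, which proves the inclusion and the lemma.

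The only delicate point is verifying that $G_{\langle X\rangle}$ really surjects onto the full wreath product on the free branches: any wreath-product element extends by the identity on $C$ (and on the remaining $C$-branches beyond their roots) to a genuine $X$-fixing automorphism of $T$. Once this is in hand the rest is elementary counting.
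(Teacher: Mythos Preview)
Your proof is correct and follows a route genuinely different from the paper's. The paper also reduces to a finite union of single-vertex closures, but over $X$ itself rather than its convex hull: it partitions $T$ into pieces $S_x = \{t \in T : [x,t] \cap X = \{x\}\}$, $x \in X$, and then argues directly (by tracking images $g_n(t)$ through the neighbor of $x$ on $[x,t]$) that any $t \in S_x$ with infinite $G_{\langle x\rangle}$-orbit already has infinite $G_{\langle X\rangle}$-orbit, using at the end the same extend-by-the-identity trick you isolate as the ``delicate point.'' Your approach instead passes through the convex hull $C$, uses that $G_{\langle X\rangle} = G_{\langle C\rangle}$, and performs an explicit wreath-product orbit count at the gate vertex $v$, comparing $n_A k$ with $m_A k$. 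The paper's version yields the marginally sharper inclusion $\ACL_T(X) \subseteq \bigcup_{x \in X} \ACL_T(\{x\})$; your counting argument is more systematic and makes the relationship between the $G_{\langle X\rangle}$- and $G_{\langle v\rangle}$-orbits completely transparent. One small remark: your appeal to Lemma~\ref{le2} is a bit loose, since that lemma concerns the generalized wreath product over a star rather than the branch decomposition of $\Aut(T_v)$ you actually use, but the statement you need is immediate.
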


\begin{proof}
Let $G=\Aut(T)$, $X \subset T$ be finite, non-empty, and for $x \in X$, let
\[ S_x=\{ t \in T: [x,t] \cap X=\{x\} \}. \]
where $[x,t]$ is the unique path of vertices in $T$ joining $x$ and $t$. Then each $S_x$ is a tree, and 
\[ g \in G_{\left\langle X \right\rangle} \Leftrightarrow \, \forall x \in X g[S_x]=S_x. \]

Fix $x \in X$. We show that if $t \in S_x$ is in an infinite orbit of $G_{\left\langle x \right\rangle}$, then it is in an infinite orbit of $G_{\left\langle X \right\rangle}$. This will finish the proof.

Let $g_n \in G_{\left\langle x \right\rangle}$, $n \in \NN$, witness that the orbit of $t$ under the action of $G_{\left\langle x \right\rangle}$ is infinite, $t_n=g_n(t)$, and, for the unique neighbor $s$ of $x$ with $s \in [x,t]$, let $s_n=g_n(s)$, $n \in \NN$. If $\{s_n\}$ is infinite, then by finiteness of $X$, $s_n \in S_x$, and so $t_n \in S_x$, for almost all $n$. If $\{ s_n \}$ is finite, then there are exists $n_0$ such that $t_{n_0} \in S_x$ and $t_{n_0} \in [x,s_n]$ for infinitely many $n$, that is, $s_n \in S_x$ for infinitely many $n$.

Now, it suffices to observe that if $g(t)=t'$ for some $t, t' \in S_x$ and $g \in G_{\left\langle x \right\rangle}$, then there exists $h \in G$ such that $h(t)=t'$ and $\supp(h) \subset S_x$, that is, $h \in G_{\left\langle X \right\rangle}$.
\end{proof}
   
\begin{proof}
In view of Theorems \ref{ThM1} and \ref{ThM2}, implications $(2) \Rightarrow (3)$ and $(3) \Rightarrow (1)$ are obvious. We show $(1) \Rightarrow (2)$. Since every $G_{\left\langle t\right\rangle}$ has property (FA), by Theorem 15 from \cite{Se} and remarks following it, no $G_{\left\langle t\right\rangle}$ has countable cofinality, that is, by Theorem \ref{ThM1}, $\ACL_T(\{t\})$ is finite for every $t \in T$. By Lemma \ref{lePo}, $\ACL_T(X)$ is finite for every finite non-empty $X \subset T$.
\end{proof}

\end{document}